\newtheorem{theorem}{Theorem}[section]
\newtheorem{lemma}[theorem]{Lemma}
\newtheorem{proposition}[theorem]{Proposition}
\newtheorem{corollary}[theorem]{Corollary}
\newcounter{intro}
\newtheorem{introconjecture}[intro]{Conjecture}
\newtheorem{introthm}[intro]{Theorem}
\newtheorem{introcor}[intro]{Corollary}
\theoremstyle{definition}
\newtheorem{example}[theorem]{Example}
\newtheorem{remark}[theorem]{Remark}
\newtheorem{conjecture}[theorem]{Conjecture}
\newtheorem{notation}[theorem]{Notation}
\newtheorem{chunk}[theorem]{}
\newtheorem*{ack}{Acknowledgements}
\newtheorem{question}[theorem]{Question}
\newcommand{\ges}{\geqslant}
\newcommand{\les}{\leqslant}
\newcommand{\Hom}{{\operatorname{Hom}}}
\newcommand{\Tor}{{\operatorname{Tor}}}
\DeclareMathOperator{\cone}{cone}
\newcommand{\del}{\partial}
\renewcommand{\H}{\operatorname{H}}
\DeclareMathOperator{\pdim}{projdim}
\DeclareMathOperator{\fdim}{flatdim}
\DeclareMathOperator{\depth}{depth}
\DeclareMathOperator{\reg}{reg}
\DeclareMathOperator{\ord}{ord}
\DeclareMathOperator{\cx}{cx}
\newcommand{\xra}{\xrightarrow}
\newcommand{\vp}{\varphi}
\newcommand{\x}{\bm{x}}
\newcommand{\g}{\mathsf{g}}
\renewcommand{\ll}{\ell\ell}
\newcommand{\gll}{g\ell\ell}
\DeclareMathOperator{\gr}{gr}
\newcommand{\ann}{{\operatorname{ann}}}
\newcommand{\shift}{{\scriptstyle\mathsf{\Sigma}}}
\newcommand{\lotimes}{\otimes^{\sf L}}
\DeclareMathOperator{\rank}{rank}
\newcommand{\m}{\mathfrak{m}}
\newcommand{\n}{\mathfrak{n}}
\title[Lower bounds on Loewy length]{Lower bounds on Loewy lengths of modules of finite projective dimension}
\author[N.~KC]{Nawaj KC}
\address{Department of Mathematics,
University of Nebraska, Lincoln, NE 68588, U.S.A.}
\email{nkc3@huskers.unl.edu}
\author[J.~Pollitz]{Josh Pollitz}
\address{Mathematics Department, 
Syracuse University, 
Syracuse, NY 13244 U.S.A.}
\email{jhpollit@syr.edu}
\keywords{Loewy length, flat dimension, projective dimension, (Castelnouvo-Mumford) regularity, Cohen-Macaulay, Lech's conjecture, free resolutions, socle, Koszul complex}
\subjclass[2020]{13C14 (primary), 13B10, 13D05, 13H10, 13H15}
\begin{document}

\begin{abstract}
This article is concerned with nonzero modules of finite length and finite projective dimension over a local ring. We show the Loewy length of such a module is larger than the regularity of the ring whenever the ring is strict Cohen-Macaulay, establishing a conjecture of Corso--Huneke--Polini--Ulrich for such rings. In fact, we show the stronger result that the Loewy length of a nonzero module of finite flat dimension is at least the regularity for strict Cohen-Macaulay rings, which significantly strengthens a theorem of Avramov--Buchweitz--Iyengar--Miller.
As an application we simultaneously verify a Lech-like conjecture, comparing generalized Loewy length along flat local extensions, and a conjecture of Hanes for strict Cohen-Macaulay rings. Finally, we also give notable improvements to known lower bounds for Loewy lengths without the strict Cohen-Macaulay assumption. The strongest general bounds we achieve are over complete intersection rings. 
\end{abstract}

\maketitle

\section*{Introduction}\label{s_intro}

This work is concerned with modules of finite length and finite projective dimension over a local ring. Such modules have received a great deal of attention and encode the singularity of the ring; for instance, a consequence of Roberts' New Intersection Theorem~\cite{Roberts:1987} is that if a  ring admits a nonzero module of finite length and finite projective dimension, then the ring must be Cohen-Macaulay. There are a number of interesting questions that remain open regarding these modules. In this article we revisit one on a uniform lower bound for their Loewy lengths. 

Fix a local ring $R$ with maximal ideal $\m$.  Recall that the Loewy length of an $R$-module $M$ is 
$\ll_R(M)=\inf\{i\ges 0\mid \m^i M=0\};$ this is also the minimal length filtration of $M$ with semisimple subquotients. This article focuses on understanding
\[\inf\{\ll_R(M)\mid M\neq 0\text{ finitely generated and }\pdim_R(M)<\infty\}\,,\]
which (by the New Intersection Theorem) is finite if and only if $R$ is Cohen-Macaulay; furthermore, the Auslander--Buchsbaum--Serre theorem asserts this infimum is one when $R$ is regular and at least two when $R$ is singular. Hence, this value can be interpreted as a numerical measure of the singularity of the ring. 

For a Cohen-Macaulay ring, the simplest modules to produce that have both finite length and projective dimension are quotients by systems of parameters. The following conjecture of Corso, Huneke, Polini and Ulrich~\cite{Corso/Huenke/Polini/Ulrich}, known as the \emph{Loewy Length Conjecture}, predicts the infimum above is achieved by one of these modules. 

\begin{introconjecture}\label{introconj}
    For a local ring $R$ and a nonzero $R$-module $M$ with finite projective dimension, the following inequality holds: \[\ll_R(M) \ges  \min\{\ll_R(R/(\x))\mid \x\text{ is a system of parameters on }R\}\,.\]
\end{introconjecture}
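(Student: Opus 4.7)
The overall plan is to bridge the two sides of the inequality in Conjecture A through the Castelnuovo--Mumford regularity $\reg(R)$ of $R$. That is, I would try to prove the sandwich
\begin{equation*}
\ll_R(M) \;\ges\; \reg(R) \;\ges\; \min\{\ll_R(R/(\x)) \mid \x \text{ is a system of parameters on } R\}.
\end{equation*}
This cleanly separates the problem into a homological lower bound on Loewy length in terms of a global invariant of $R$ (the left inequality) and a more combinatorial statement about how sharply systems of parameters can approximate $\m$ (the right inequality). A bonus of this approach is that the left inequality is already interesting on its own, and it strictly strengthens the Avramov--Buchweitz--Iyengar--Miller theorem mentioned in the abstract.

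For the right-hand inequality, I would pass (after a faithfully flat extension by a residue-field extension, if necessary) to a system of parameters $\x$ generating a minimal reduction of $\m$. Then $\ll_R(R/(\x))$ coincides with $1$ plus the reduction number $r_{\x}(\m)$, and under Cohen-Macaulayness one can compare this reduction number with the $a$-invariant of $\gr_{\m}(R)$ and hence with $\reg(R)$. I expect this step to be essentially formal and to go through in the generality of any Cohen-Macaulay ring.

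The substantive content is the left-hand inequality $\ll_R(M) \ges \reg(R)$, and this is where the strict Cohen-Macaulay hypothesis is likely forced upon us. The strategy I would try is to consider the Koszul complex $K$ on a minimal generating set of $\m$ and analyze $M \lotimes K$. Finiteness of $\pdim_R(M)$ (or $\fdim_R(M)$) gives sharp control over the length of this complex and of its homology, while the assumption $\m^{\ll_R(M)}M = 0$ translates into an annihilation statement on $\Tor_*^R(k,M)$ through the top of the Koszul homology. Combining these, one would aim to exhibit a nonzero Tor class in a homological degree detecting $\reg(R)$ that must be killed by the Loewy filtration of $M$, producing the desired inequality.

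The main obstacle is exactly this last step: without a structural hypothesis like strict Cohen-Macaulayness, there is no obvious reason that $\reg(R)$ should be visible in the Koszul homology of a single module of finite projective dimension, and indeed in full generality the conjecture remains open. I expect strict Cohen-Macaulayness to enter by forcing an identification between $\reg(R)$ and the top nonvanishing degree of $\Tor_*^R(k,R/(\x))$ for a parameter ideal $(\x)$, which is precisely the bridge needed to complete the argument. In the absence of that hypothesis one should not hope for the full lower bound $\reg(R)$, and weaker bounds, such as those available over complete intersections by a change-of-rings spectral sequence from a regular presentation, are the realistic target.
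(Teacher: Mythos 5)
Your high-level decomposition is the same as the paper's: for strict Cohen--Macaulay rings the conjecture is proved by sandwiching, $\ll_R(M)\ges \reg(R^\g)+1\ges \gll_R(R)$. Your right-hand inequality is indeed the formal half: after extending the residue field one picks a superficial system of parameters $\x$ in $\m\smallsetminus\m^2$ and compares $\ll_R(R/(\x))$ with $\reg(R^\g)$ through the Koszul homology of $\mathrm{Kos}^R(\x)^\g$ and Eisenbud--Goto (\cref{l_bound}); strict Cohen--Macaulayness upgrades this to the equality $\reg(R^\g)+1=\ll_R(R/(\x))=\gll_R(R)$ (\cref{example_strict_cm,perfectModules}). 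That part of your plan is sound.

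The genuine gap is the left-hand inequality, which you acknowledge you cannot close, and the mechanism you sketch for it is not the one that works. Tensoring $M$ with the Koszul complex on $\m$ and hunting for Tor classes killed by $\m^{\ll_R(M)}$ is essentially the strategy of the order-type bounds in \cref{t_ci}: it only detects the $\m_Q$-adic orders of the defining relations and yields $\ll_R(M)\ges \max\ord(R)$, which is in general strictly smaller than $\reg(R^\g)+1$. The paper's actual device is different. One sets $A=\mathrm{Kos}^R(\x)$ for a superficial system of parameters and $n=\reg(R^\g)$, and constructs a chain map $\sigma\colon F\to A$ from the minimal free resolution of $k$, lifting the inclusion of a socle element of $R/(\x)$ of top order, with the key property $\sigma(F)\subseteq \m^n A$ (\cref{mainConstruction}). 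The obstruction to building $\sigma$ degree by degree lies in $\H_i(A^\g)_{i+j}$ for $j\ges\reg(R^\g)$, which vanishes (\cref{p_linearitydefect}); this is exactly where strict Cohen--Macaulayness, via $\reg(R^\g)=\ll_R(R/(\x))-1$, enters. Given such a $\sigma$, if $\ll_R(M)\les n$ and $\Tor^R_{d+1}(M,k)=0$ then $M\otimes_R\sigma=0$, and comparing the cone of $\sigma$ with the direct sum identifies $\Tor^R_d(M,k)$ as a summand of $\Tor^R_{d+1}(M,N')$ for a module $N'$ of finite Loewy length, which vanishes --- a contradiction (\cref{newlemma}). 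Without this construction, or a substitute for it, your outline does not produce a nonzero Tor class ``detecting $\reg(R^\g)$,'' so the substantive inequality $\ll_R(M)\ges\reg(R^\g)+1$ is not established; and, as you note, outside the strict Cohen--Macaulay case the conjecture remains open.
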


The proposed uniform lower bound in the conjecture is called the generalized Loewy length of $R$, denoted $\gll_R(R)$; this invariant is a Loewy length analog to the Hilbert-Samuel multiplicity of $R$, and it has been studied in a number of works; see, for example, \cite{Avramov/Buchweitz/Iyengar/Miller:2010,Briggs/Grifo/Pollitz:2024,Corso/Huenke/Polini/Ulrich,Alessandro:2016,Ding:1994,Hashimoto/Shida:1997,Koh/Lee:1998,Puthenpurakal:2017}.
 
 The strongest evidence for the Loewy Length Conjecture was established by  Avramov--Buchweitz--Iyengar--Miller~\cite[Theorem~1.2]{Avramov/Buchweitz/Iyengar/Miller:2010}: \emph{If $R$ is a Gorenstein ring with Cohen-Macaulay associated graded ring $R^\g$ and infinite residue field, then \cref{introconj} holds}. Without any assumption on the residue field, they also provide a bound in terms of the (Castelnouvo-Mumford) regularity of $R^\g$, denoted $\reg(R^\g)$. The main result of this article is the following significant strengthening.

\begin{introthm}
\label{t_main_intro}
    If $R$ is local with Cohen-Macaulay associated graded ring, then any nonzero $R$-module $M$ of finite flat dimension satisfies:
    \[\ll_R(M)\geqslant \reg(R^\g)+1\,.\]
    Furthermore, if $R$ has an infinite residue field, then $\ll_R(M)\geqslant \gll_R(R).$
\end{introthm}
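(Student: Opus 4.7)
The plan is to pass to the associated graded module $M^\g = \bigoplus_{i \ges 0} \m^i M/\m^{i+1} M$ over the Cohen-Macaulay graded ring $R^\g$, translating the Loewy length bound into a Castelnuovo--Mumford regularity bound. Assume $\ll_R(M) = \ell < \infty$, since otherwise the first inequality is vacuous. Then $M^\g$ is a nonzero graded $R^\g$-module concentrated in degrees $0, 1, \ldots, \ell - 1$ with nonzero component in top degree $\ell - 1$, so $\reg_{R^\g}(M^\g) = \ell - 1$. It therefore suffices to prove $\reg_{R^\g}(M^\g) \ges \reg(R^\g)$.

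The key technical step is to show that $M^\g$ has finite projective dimension over $R^\g$. Following Avramov--Buchweitz--Iyengar--Miller, take a finite flat resolution of $M$ over $R$, filter it by powers of $\m$, and use the Cohen-Macaulay hypothesis on $R^\g$ to argue that the filtration is strict in each homological degree, so the associated graded complex is a finite flat resolution of $M^\g$ over $R^\g$. To accommodate finite flat (rather than finite projective) dimension, one first reduces to a finitely generated $R$-module of finite projective dimension with the same Loewy length---for instance, by exploiting that $M$ is an $R/\m^\ell$-module and using change-of-rings to transfer the finite flat dimension of $M$ to a suitable finitely generated representative---before running the filtration argument.

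Once $M^\g$ is known to be a nonzero finite length graded $R^\g$-module of finite projective dimension, the Auslander--Buchsbaum formula gives $\pdim_{R^\g}(M^\g) = \dim R^\g$, so $M^\g$ is perfect of maximal codimension. A standard regularity inequality for such modules---obtained, for example, via an Artinian linear reduction (after passing to infinite residue field by a faithfully flat extension if necessary, choose a linear regular system of parameters $\x^*$ on $R^\g$ and compare the socle degrees of $M^\g/(\x^*) M^\g$ and $R^\g/(\x^*)$) or via graded local duality against $\omega_{R^\g}$---yields $\reg_{R^\g}(M^\g) \ges \reg(R^\g)$, establishing the first claim.

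For the second statement, the infinite residue field lets us choose a linear system of parameters $\x^* = x_1^*, \ldots, x_d^*$ in $R^\g_1$, which is a regular sequence by the Cohen-Macaulay hypothesis. Lifting to $\x \in \m$, the Valabrega--Valla criterion gives $(R/(\x))^\g \cong R^\g/(\x^*)$, whose top nonzero degree equals $\reg(R^\g)$. Hence $\ll_R(R/(\x)) = \ll_{R^\g}(R^\g/(\x^*)) = \reg(R^\g) + 1$, and so $\gll_R(R) \les \ll_R(R/(\x)) = \reg(R^\g) + 1 \les \ll_R(M)$. The main obstacle is the key technical step above: extending the ABIM strict-filtration argument from finite projective dimension to finite flat dimension, since flat dimension does not descend as cleanly to finitely generated submodules as projective dimension does.
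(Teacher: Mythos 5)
Your reduction to the associated graded ring breaks down at the step you yourself flag as the key one: the claim that a finite flat (or free) resolution of $M$ over $R$ has strict $\m$-adic filtration, so that $M^\g$ acquires finite projective dimension over $R^\g$. This is false in general, even for strict Cohen--Macaulay hypersurfaces. Take $R=k\llbracket x,y\rrbracket/(x^2-y^3)$, so $R^\g=k[x,y]/(x^2)$ is Cohen--Macaulay, and let $M=R/(x)\cong k\llbracket y\rrbracket/(y^3)$, which has $\pdim_R(M)=1$. Then $M^\g\cong k[y^*]/((y^*)^3)$ with $x^*$ acting as zero, and this module has \emph{infinite} projective dimension over $k[x,y]/(x^2)$ (already $R^\g/(x^*)\cong k[y^*]$ does, via the periodic resolution $\cdots\xra{x^*}R^\g\xra{x^*}R^\g$). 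The failure is exactly the phenomenon of nonzero linearity defect: Cohen--Macaulayness of $R^\g$ gives no control over the strictness of the filtration on resolutions of arbitrary modules of finite flat dimension, and the attribution of such a transfer to Avramov--Buchweitz--Iyengar--Miller is not accurate (their argument relies on finite generation and Gorensteinness in an essentially different way, as the paper notes). The subsequent steps are also shakier than advertised: the reduction from finite flat dimension to a finitely generated module of finite projective dimension ``with the same Loewy length'' is not supplied and is not routine, and the asserted inequality $\reg_{R^\g}(M^\g)\ges\reg(R^\g)$ for finite length perfect modules is essentially the graded case of the theorem itself rather than a standard fact one can quote.

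The paper avoids passing $M$ to $M^\g$ altogether. It only needs a strictness statement for one very small complex: the Koszul complex $A=\mathrm{Kos}^R(\x)$ on a superficial system of parameters, where $\partial(A_{i+1})\cap\m^{j+1}A_i=\partial(\m^jA_{i+1})$ for $j\ges\reg(R^\g)$ follows from the vanishing of $\H_i(A^\g)$ in high internal degrees (this is where strict Cohen--Macaulayness enters). That identity is used to build a chain map $\sigma\colon F\to A$ from the minimal free resolution of $k$, lifting a socle element of $R/(\x)$ of maximal order $n=\reg(R^\g)$, with $\sigma(F)\subseteq\m^nA$. A separate cone-and-Tor argument then shows that the existence of such a $\sigma$ forces $\ll_R(M)\ges n+1$ for every nonzero $M$ with $\Tor^R_i(M,k)=0$ for some $i$, which handles non-finitely-generated modules of finite flat dimension with no extra reduction. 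Your treatment of the second statement (superficial parameters, $(R/(\x))^\g\cong R^\g/(\x^*)$, and $\ll_R(R/(\x))=\reg(R^\g)+1$) does match the paper's Lemmas on this point, but it cannot rescue the first inequality.
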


It is worth highlighting that the result above applies to modules that are not necessarily finitely generated, and that both the finite generation and Gorenstein properties were fundamental ingredients in the proof of the result of Avramov--Buchweitz--Iyengar--Miller. Furthermore, the bounds in \cref{t_main_intro} were not previously known even for (localizations of) standard graded Cohen-Macaulay algebras. 

The proof of \cref{t_main_intro} can be found at the end of \cref{s_main_theorem}. The two essential ingredients are \cref{newlemma} and \cref{mainConstruction}. The former abstractly identifies certain modules whose existence impose lower bounds on the Loewy lengths of nonzero modules having finite flat dimension; the latter is where the strict Cohen-Macaulay assumption is leveraged to show that these special modules (specified in \cref{newlemma}) exist over such rings and are used to establish the bounds in \cref{new_main_theorem}.

There are several consequences of \cref{t_main_intro} to strict Cohen-Macaulay rings. One is the following, where the generalized Loewy length of a module is defined analogous to how it was defined for the ring; cf.\@ \cref{Loewy_length}. 

\begin{introcor}
     With $R$ as in \cref{t_main_intro}, if $M$ is a nonzero Cohen-Macaulay $R$-module of finite projective dimension, then  
    \[ \gll_R(M) \ges \reg(R^\g)+1\,. \]
    Furthermore, if $R$ has an infinite residue field, then $\gll_R(M)\ges \gll_R(R).$ 
\end{introcor}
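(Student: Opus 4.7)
The plan is to deduce the corollary directly from \cref{t_main_intro} by applying it to $M/\bm{x}M$ for a suitable system of parameters $\bm{x}$ on $M$.

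First, I would unfold the definition of $\gll_R(M)$ (as alluded to in \cref{Loewy_length}), which in analogy with the ring case should read
\[
\gll_R(M) \;=\; \inf\{\ll_R(M/\bm{x}M)\mid \bm{x}\text{ is a system of parameters on }M\}\,.
\]
So it suffices to bound $\ll_R(M/\bm{x}M)$ uniformly from below over all systems of parameters $\bm{x}$ on $M$.

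Next, fix such an $\bm{x}=x_1,\dots,x_d$ with $d=\dim_R M$. Since $M$ is Cohen-Macaulay, $\bm{x}$ is an $M$-regular sequence, so the Koszul complex $K(\bm{x};M)$ resolves $M/\bm{x}M$ over $M$. Combined with a finite projective resolution of $M$ over $R$, this yields $\pdim_R(M/\bm{x}M)\les \pdim_R(M)+d<\infty$. Moreover $M/\bm{x}M\neq 0$ by Nakayama's lemma, since $\bm{x}\subseteq\m$ and $M\neq 0$. Hence $M/\bm{x}M$ is a nonzero $R$-module of finite projective dimension, and in particular of finite flat dimension, so the hypotheses of \cref{t_main_intro} are met.

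Finally, applying \cref{t_main_intro} to $M/\bm{x}M$ gives
\[
\ll_R(M/\bm{x}M)\;\ges\;\reg(R^\g)+1\,,
\]
and $\ll_R(M/\bm{x}M)\ges \gll_R(R)$ when the residue field is infinite. Taking the infimum over all systems of parameters on $M$ yields both inequalities. I expect no real obstacle here: the whole argument is a formal reduction to \cref{t_main_intro}, with the only subtle points being (i) verifying $\pdim_R(M/\bm{x}M)<\infty$, which is immediate from Cohen-Macaulayness of $M$, and (ii) making sure the chosen definition of $\gll_R(M)$ matches the one recorded in \cref{Loewy_length}.
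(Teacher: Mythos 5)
Your proposal is correct and follows essentially the same route as the paper: reduce to the nonzero finite-length module $M/\x M$ of finite projective dimension for each system of parameters $\x$ on $M$, apply the main theorem to it, and take the infimum. The paper simply asserts that $M/\x M$ has finite projective dimension, whereas you supply the (standard) Koszul-complex justification; otherwise the arguments coincide.
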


Another application is that when $R$ is strict Cohen-Macaulay,
\[\gll_R(R) = \ll_R(R/(\x))\] for all maximal superficial sequences $\x$ in $\m\smallsetminus\m^2$, where $\m$ denotes the maximal ideal of $R$;  cf.\@ \cref{perfectModules}. Furthermore, we introduce a version of the long-standing conjecture of Lech~\cite{Lech:1960}, where Hilbert-Samuel multiplicity is replaced with generalized Loewy length (see \cref{conjecture_loewy_lech}), and we provide the following evidence:

\begin{introcor}
\label{intro_cor_gll}
    Suppose $R\to S$ is a map of Cohen-Macaulay local rings with infinite residue fields. If $\fdim_R(S)<\infty$ and $R$ is strict Cohen-Macaulay, then \[\gll_R(R)\les \gll_S(S)\,.\]
\end{introcor}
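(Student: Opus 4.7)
The plan is to reduce the inequality $\gll_R(R)\les \gll_S(S)$ to a direct application of \cref{t_main_intro} after transporting a suitable witness module from $S$ back to $R$. Specifically, I would pick a system of parameters $\bm{y} = y_1, \ldots, y_d$ on $S$ that attains the minimum defining $\gll_S(S)$, so that $\ll_S(S/(\bm{y})) = \gll_S(S)$; such a $\bm{y}$ exists because $S$ has at least one system of parameters (it is Noetherian local) and the defining expression is a minimum of positive integers. The module $N \coloneqq S/(\bm{y})$ is then a nonzero $R$-module of finite length, viewed via the structure map $R\to S$, and it is the candidate on which I hope to deploy \cref{t_main_intro}.

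The first substantive step is verifying that $\fdim_R(N) < \infty$. Since $S$ is Cohen-Macaulay, $\bm{y}$ is an $S$-regular sequence, so for each $0\les i<d$ the short exact sequence
\[0\to S/(y_1,\ldots,y_i)\xra{\,y_{i+1}\,} S/(y_1,\ldots,y_i)\to S/(y_1,\ldots,y_{i+1})\to 0\]
of $R$-modules, combined with the hypothesis $\fdim_R(S)<\infty$ and the long exact sequence for $\Tor^R_*(-,k)$, inductively yields $\fdim_R(N)\les \fdim_R(S)+d<\infty$.

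Now \cref{t_main_intro} applies to the nonzero $R$-module $N$: because $R$ is strict Cohen-Macaulay with infinite residue field, $\ll_R(N)\ges \gll_R(R)$. Next, since $R\to S$ is a local ring map we have $\m_R S\subseteq \m_S$, hence $\m_R^n N\subseteq \m_S^n N$ for every $n\ges 0$, which forces $\ll_R(N)\les \ll_S(N)$. Chaining these bounds gives
\[\gll_R(R)\les \ll_R(N)\les \ll_S(N)= \gll_S(S)\,,\]
as required. The only step with any subtlety is the finite flat dimension claim in the middle paragraph, but this is a routine change-of-rings argument enabled by the Cohen-Macaulayness of $S$; the real content of the corollary is already packaged into \cref{t_main_intro}, which does all the heavy lifting.
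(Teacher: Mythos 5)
Your proof is correct, and it takes a genuinely different route from the paper. The paper completes $R$ and $S$, takes a Cohen factorization $R\to R'\to S$ with $R\to R'$ weakly regular and $R'\to S$ surjective, invokes Herzog's tangential flatness theorem to see that $R'$ is again strict Cohen-Macaulay with $\gll_R(R)=\reg(R^\g)+1=\reg((R')^\g)+1=\gll_{R'}(R')$, and then applies the module-level bound (\cref{mainTheorem}) to $S$ viewed as a Cohen-Macaulay $R'$-module of finite projective dimension. You instead pull back a single artinian witness: choosing $\bm y$ with $\ll_S(S/(\bm y))=\gll_S(S)$, using Cohen-Macaulayness of $S$ to make $\bm y$ a regular sequence so that a change-of-rings induction gives $\fdim_R(S/(\bm y))<\infty$, applying \cref{t_main_intro} over $R$, and finishing with the trivial comparison $\ll_R\les\ll_S$ coming from $\m_R S\subseteq\m_S$. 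Your argument is shorter and avoids both completion and the Cohen factorization machinery; the paper's route is longer but extracts additional structure (the invariance of $\gll$ along the weakly regular part of the factorization), which it then leans on when discussing whether the Loewy Length Conjecture would imply the Lech-type conjecture in general. Two cosmetic points: $N=S/(\bm y)$ has finite \emph{Loewy} length over $R$ but need not have finite length (it may fail to be finitely generated over $R$ when the residue field extension is infinite), though nothing in your argument uses finite length; and your long exact sequence step should be run against arbitrary coefficient modules (or one can settle for $\Tor^R_j(N,k)=0$ for $j\gg 0$, which already suffices for \cref{new_main_theorem}).
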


 This corollary establishes interesting ring theoretic properties along flat local extensions whose base is strict Cohen-Macaulay; cf.\@ \cref{cor_reduction_regularity_nondec}. In particular, we verify a conjecture of Hanes~\cite[Conjecture~3.1]{Hanes:2001} on reduction numbers along flat local extensions when the base is assumed to be strict Cohen-Macaulay.

In the final part of the paper, \cref{s_ci}, we turn our attention to  lower bounds for the Loewy lengths of nonzero modules of finite flat dimension without making assumptions on the structure of $R^\g$. We give a number of general results that are in terms of the $\m$-adic order, denoted ord, of generators for the defining relations of $R$. The main result in this section is the following.

 \begin{introthm}
 \label{t_intro_ci}
  \label{intro_ci}
     If $R$  is complete intersection of codimension $c$, and $M$ is a nonzero $R$-module of finite flat dimension, then 
          \[\ll_R(M)\ges \sum_{i=1}^c\ord(f_i)-c+1\,,\]
          where $\widehat{R}\cong Q/(f_1,\ldots,f_c)$ is any Cohen presentation of $R$. 
 \end{introthm}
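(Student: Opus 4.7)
The plan is to reduce to the complete case and then construct, directly from the regular sequence $f_1,\dots,f_c$ in $Q$, the kind of ``special module'' identified in \cref{newlemma}, without appeal to the strict Cohen-Macaulay hypothesis used in \cref{mainConstruction}. The key idea is that the Koszul complex $L = K(f_1,\dots,f_c;Q)$ is a minimal $Q$-free resolution of $\widehat R$, and writing $a_i=\ord(f_i)$, the fact that $f_i\in\n^{a_i}$ places the top Koszul generator $e_1\wedge\cdots\wedge e_c$ at $\n$-adic order $\sum a_i$, while the $c$ homological steps down to $L_0$ account for the subtraction of $c$ in the residual Loewy bound $\sum a_i - c$.

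First, I pass to the completion. Loewy length, flat dimension, and each $\ord(f_i)$ are preserved, so I may assume $R=Q/(f_1,\dots,f_c)$ with $Q$ regular local, $\n$ its maximal ideal, and $\m=\n R$. In particular $\n^j M = \m^j M$ for every $R$-module $M$, so Loewy length is detected equally well by the $\n$-adic filtration coming from $Q$.

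Next, I would construct the special $R$-module $W$ called for by \cref{newlemma}: an $R$-module together with a distinguished cycle $\omega$ so that $\m^{\sum a_i - c}\omega \neq 0$ and so that $W$ meets whatever Ext- or socle-type vanishing condition \cref{newlemma} requires. The natural source of $\omega$ is the top-degree generator of $L_c$, which under any lifting through an $R$-free cover of $W$ must absorb the full $\n$-adic weight $\sum a_i$ from the differentials of $L$; the regular-sequence property of $f_1,\dots,f_c$ in $Q$ then guarantees that no Koszul boundary can cancel this $\n$-adic content, so $\omega$ survives in the claimed filtration grade. One concrete candidate is the image of $e_1\wedge\cdots\wedge e_c$ inside an appropriate quotient of $L\otimes_Q R$, or equivalently a suitably truncated $c$-th syzygy module; either realization yields a cycle whose non-triviality deep in the $\m$-adic filtration is controlled by the regularity of the $f_i$ in $Q$.

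Feeding $W$ into \cref{newlemma} then yields $\ll_R(M)\ges \sum a_i - c + 1$ for every nonzero $M$ of finite flat dimension. The principal obstacle is the middle step: verifying that the module $W$ built out of $L$ actually satisfies the abstract hypotheses of \cref{newlemma}. Where \cref{mainConstruction} invokes strict Cohen-Macaulayness of $R^\g$ to produce the required special module, here I expect the regular-sequence property of $f_1,\dots,f_c$ in $Q$ to play the analogous role, and this substitution is precisely what is responsible for the ``$c$'' subtracted from $\sum\ord(f_i)$ in the final bound.
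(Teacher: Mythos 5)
Your overall strategy---reduce to the complete case and then feed a suitable pair (module $N$, lift $\sigma$) into \cref{newlemma}---is indeed the paper's first route to this theorem. But the proposal stops exactly where the proof has to begin. \cref{newlemma} does not ask for a ``special module with a distinguished cycle''; it asks for a nonzero map $k\to N$ into a finite-Loewy-length $R$-module together with a lift $\sigma\colon F\to G$ between \emph{$R$-free resolutions} satisfying $\sigma(F_i)\subseteq \m^n G_i$. Over a singular complete intersection the minimal free resolution $F$ of $k$ over $R$ is infinite, and producing a lift of a socle element that stays inside $\m^n$ in \emph{every} homological degree is the entire difficulty. The Koszul complex $L=\mathrm{Kos}^Q(\bm f)$ resolves $R$ over $Q$, not anything over $R$ (indeed $L\otimes_Q R$ has zero differentials), so pointing at its top generator $e_1\wedge\cdots\wedge e_c$ does not by itself yield the required data. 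What the paper actually does is: write $f_i=\sum a_{ij}t_j+\sum b_{ij}x_j$ with all coefficients in $\m_Q^{\ord(f_i)-1}$, build a map $\alpha$ of Koszul complexes over $Q$ whose $c$-fold exterior power lands in $\m_Q^{\sum\ord(f_i)-c}$ (this, not ``$c$ homological steps,'' is where the $-c$ comes from), dualize to get $\tilde\sigma\colon K\to\tilde A$ whose cone resolves $R/(\x,\det(a_{ij}))$, and then apply the Eisenbud--Shamash functor $\Gamma\otimes_Q^\tau(-)$ to convert this into a chain map from the minimal $R$-free resolution of $k$ to $\mathrm{Kos}^R(\x)$, lifting $k\to R/(\x)$ and landing in $\m^n\mathrm{Kos}^R(\x)$. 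That last step is the replacement for the strict Cohen--Macaulay machinery, and nothing in your sketch supplies it.

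You also do not identify the target module $N$ (in the paper it is $R/(\x)$ for a maximal regular sequence $\x$ in $\m\smallsetminus\m^2$, not a quotient or syzygy of $L\otimes_Q R$), nor the nonzero map $k\to N$ being lifted. Since you explicitly flag the middle step as an unresolved obstacle, the proposal is a correct guess at the architecture but not a proof. For completeness: the paper also gives a second, quite different proof via the Herzog--Ulrich--Backelin filtration $0=U_0\subseteq\cdots\subseteq U_n=R^t$ with $U_{i-1}\subseteq\m U_i$ and MCM subquotients, combined with the vanishing $\Tor^R_{\ges 1}(M,N)=0$ for $M$ of finite flat dimension and $N$ MCM; if you want to avoid the Eisenbud--Shamash construction, that is the more elementary path.
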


We present two proofs of the theorem above. The first is similar to the proof of \cref{t_main_intro} as it constructs certain artinian quotients so that \cref{newlemma} can be applied, while the second was sketched to the authors by Walker (at least when $M$ has finite length) and makes use of a construction from~\cite{Herzog/Ulrich/Backelin:1991}. In many regards, the lower bound in the previous theorem could be regarded as the ``expected" generalized Loewy length of a complete intersection ring. However, without assuming $R^\g$ is complete intersection itself, this expected generalized Loewy length can be arbitrarily smaller than the actual generalized Loewy length of $R$; see \cref{e_nonstrictci} for strict Cohen-Macaulay complete intersection rings witnessing this behavior.  

Finally, the central problem in this article (\cref{introconj}) complements the \emph{Length Conjecture} studied in \cite{Iyengar/Ma/Walker:2022}; cf.\@ \cref{c_IMW}. As Loewy length and length are subtle invariants in their own way, it is perhaps unsurprising we employ different techniques in the present article to establish our uniform lower bounds for Loewy lengths compared to the methods used by Iyengar--Ma--Walker for length. It is worth remarking that the existence of Ulrich modules, which played a pivotal role in \cite{Iyengar/Ma/Walker:2022}, are now known to not exist over rings where the Loewy Length Conjecture was settled in the positive here. Namely, \cite{Iyengar/Ma/Walker/Zhuang:2024} provides examples of complete intersection rings that are strict Cohen-Macaulay where the method from \cite{Iyengar/Ma/Walker:2022} cannot be applied; albeit, the Length Conjecture remains unsettled for these examples.

\begin{ack}
We  thank Srikanth Iyengar for several helpful discussions regarding this work and comments on previous versions of the manuscript. 
We also thank Mark Walker for suggesting a proof of \cref{t_mark} with us, as well as Linquan Ma for making us aware of the conjecture in \cite{Hanes:2001}. We are grateful to Alberto Corso,  Craig Huneke, Claudia Polini and Bernd Ulrich for discussing their conjectures, and their upcoming work in \cite{Corso/Huenke/Polini/Ulrich}, with us. Finally, we are very thankful to the referee, their many comments led to significant improvements of the article.

The first author was supported by the National Science Foundation grant DMS-1928930 and by the Alfred P.\@ Sloan Foundation under grant G-2021-16778, while he was in residence at the Simons Laufer Mathematical Sciences Institute (formerly MSRI) in Berkeley, California, during the Spring 2024 semester. The first author was also partly supported by NSF grants DMS-2200732 and DMS-2044833. The second author was supported by the NSF grant  DMS-2302567.
\end{ack}

\section{Loewy length and the associated graded ring}\label{s_gll_sr}

This section recounts the necessary background on (generalized) Loewy length. Throughout, $R$ is a commutative noetherian local ring with maximal ideal $\m$. 

\begin{chunk}
\label{Loewy_length}
The \emph{Loewy length} of an $R$-module $M$, denoted $\ll_R(M)$, is the least nonnegative integer $i$ such that  $\m^i M = 0$; if no such integer exists set $\ll_R(M)=\infty$. That is to say, $\ll_R(M)$ is the infimum over all nonnegative integers $i$ such that $M$ admits a filtration of length $i$ with subquotients semisimple $R$-modules (i.e., each subquotient is a direct sum of copies of $k$). The Loewy length of a module is always bounded above by its length, but there are plenty of infinitely generated modules of finite Loewy length. For a finitely generated module, its length is finite if and only if its Loewy length is finite. 

When $M$ is a finitely generated $R$-module, its \emph{generalized Loewy length} is
   \[ \gll_R(M) = \inf\{ \ll_R(M/\x M) \mid \x\text{ is a system of parameters on } M \}.\]
   The invariant $\gll_R(R)$ can be regarded as a measure of the singularity of $R$, as it equals one if and only if $R$ is regular. 
\end{chunk}

\begin{remark}\label{r_loewy}
    Generalized Loewy length is to Loewy length as Hilbert-Samuel multiplicity is to length. Whereas there is much known on the Hilbert-Samuel multiplicity of modules over a local ring, generalized Loewy length can be a more subtle invariant; for instance, Loewy length can fail to be additive even along split exact sequences. Another example, relevant to this article, is that the Hilbert-Samuel multiplicity of a Cohen-Macaulay local ring $R$ is the length of $R/(\x)$ where $\x$ is a sufficiently general system of parameters; whether generalized Loewy length is the Loewy length of $R/(\x)$ for such a $\x$ remains an unanswered question of DeStefani over Gorenstein rings having infinite residue fields \cite[Question~4.5]{Alessandro:2016}. 
\end{remark}

\begin{notation}
    The associated graded ring of $R$ is
\[R^\g \coloneqq\gr_{\m}(R)=\bigoplus_{j=0}^\infty \frac{\m^j}{\m^{j+1}}\,.\] For a sequence of elements $\x$ in $R$, write $\x^*$ for its corresponding sequence of initial forms in $R^\g.$
 Also, if $C$ is a minimal $R$-complex, then it admits an exhaustive filtration by subcomplexes $\{\mathcal{F}^iC\}_{i\geqslant 0}$ where 
 \[
(\mathcal{F}^iC)_j=\m^{j-i} C \quad \text{ for each }j\in \mathbb{Z}\,.
\]
The \emph{linear part} of $C$ is the associated graded complex of $C$ with respect to this filtration, denoted $C^\g$; cf.\@ \cite[1.2]{Herzog/Iyengar:2005}. 
In particular $C^\g$ is bigraded with 
\[
C^\g_{i,j}= \frac{\m^{j-i}C_{i}}{\m^{j-i+1}C_{i}}\,,
\]
and the entries in each differential are linear forms of $R^\g$. 
\end{notation}

\begin{chunk}\label{c_ll_reduction}
     A sequence $\x$ in $R$ is \emph{superficial} if its sequence of initial forms in $R^\g$ is part of a system of parameters for $R^\g$; such a sequence is part of a system of parameters for $R$. Moreover, if $R$ has an infinite residue field, then there exists a superficial system of parameters in $\m\smallsetminus \m^2$ \cite[Theorem 8.6.6]{Huneke/Swanson:2006}. 
\end{chunk}

\begin{chunk}
\label{c_strict_ci_sr}
A local ring $R$ is \textit{strict Cohen-Macaulay} if its associated graded ring $R^\g$ is Cohen-Macaulay. Strict Cohen-Macaulay rings are, in particular, Cohen-Macaulay; see, for example, \cite{Achilles/Avramov:1982} (or \cite{Froberg:1987}). Hypersurface rings are strict Cohen-Macaulay, however  complete intersection rings of higher codimension need not be strict Cohen-Macaulay (see, for example, \cref{r_alessandro}).
\end{chunk}

\begin{chunk} \label{c_regularity}
For a standard graded algebra $S$ over a field $S_0$, write $S_+$ for its homogeneous maximal ideal. For a homogeneous ideal $I$ of $S$, let $\H_I(-)$ denote the local cohomology functor with respect to $I$. The \emph{regularity of} $S$ is 
\[
\reg(S)\coloneqq \max\{i+j\mid \H_{S_+}^i(S)_j\neq 0\text{ for some }i\}\,.
\]
\end{chunk}

\begin{lemma}
\label{l_bound}
    If $R$ is a local ring with a superficial system of parameters $\x$ in $\m\smallsetminus\m^2$, and set $A=\mathrm{Kos}^R(\x)$,  then 
    \[\ll_R(R/(\x))-1\les \reg(R^\g)=\max\{j\ges 0\mid \H_i(A^\g)_{i+j}\neq 0 \text{ for some }i\}\,.\]
\end{lemma}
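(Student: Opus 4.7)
The plan is to establish both the equality on the right and the inequality on the left, with the right-hand equality being the main technical content.

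The starting point is the identification $A^\g \cong \mathrm{Kos}^{R^\g}(\x^*)$ as bigraded complexes of $R^\g$-modules: since each $\x_i\in \m\setminus\m^2$, passing to the linear part replaces multiplication by $\x_i$ with multiplication by its initial form $\x_i^*\in R^\g_1$, so $A^\g$ is the Koszul complex on the linear forms $\x^*$. Because $\x$ is superficial, $\x^*$ is a homogeneous system of parameters for the standard graded algebra $R^\g$, hence a linear system of parameters.

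For the equality, the plan is to invoke the classical identity that for any standard graded algebra $S$ over a field equipped with a linear system of parameters $\ell$,
\[
\reg(S) = \max\{j\ges 0 \mid \H_i(\mathrm{Kos}^S(\ell))_{i+j} \neq 0 \text{ for some } i\}\,.
\]
The key point is that $\sqrt{(\ell)}=S_+$, so $\H^*_{(\ell)}(S) = \H^*_{S_+}(S)$, and the former is computed by the \v{C}ech complex on $\ell$, which is a direct limit of shifted Koszul complexes. A graded local duality argument then matches the regularity of $S$ with the top ``excess'' degree $j$ on the Koszul side. When $R^\g$ is Cohen-Macaulay this is immediate, since $\mathrm{Kos}^{R^\g}(\x^*)$ then resolves the artinian ring $R^\g/(\x^*)$ and regularity is preserved when modding out by a linear regular sequence; the general non-CM case requires the full Koszul--local cohomology comparison, and this is the step I expect to need the most care (and which will most likely be handled by citation).

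For the inequality, I plan to use that $\H_0(A^\g)=R^\g/(\x^*)$ and exploit the containments $\x\m^{j-1}\subseteq (\x)\cap \m^j$ to produce a degree-preserving surjection of graded $R^\g$-modules
\[
R^\g/(\x^*)\twoheadrightarrow \gr_\m(R/(\x))\,.
\]
By Nakayama applied degree by degree, the top nonzero degree of $\gr_\m(R/(\x))$ equals $\ll_R(R/(\x))-1$, and so this top degree is bounded above by the top nonzero degree of $\H_0(A^\g)=R^\g/(\x^*)$. Taking $i=0$ in the right-hand maximum, this last quantity is in turn at most $\reg(R^\g)$ by the equality just established, yielding the stated bound.
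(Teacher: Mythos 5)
Your proposal is correct and follows essentially the same route as the paper: the equality is obtained by identifying $A^\g$ with the Koszul complex on the linear system of parameters $\x^*$ of $R^\g$ and citing the Eisenbud--Goto comparison of regularity with Koszul homology (the paper uses \cite[Theorem~4.3.1]{Bruns/Herzog:1998} via a Noether normalization), and the inequality comes from the surjection $R^\g/(\x^*)\twoheadrightarrow (R/(\x))^\g$ together with $\H_0(A^\g)=R^\g/(\x^*)$, which is exactly the content of the paper's appeal to \cite[Proposition~8.2.4]{Huneke/Swanson:2006}.
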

\begin{proof}
    Set $d=\dim(R)$, and fix a Noether normalization $S=k[t_1,\ldots, t_d]\hookrightarrow R^\g$ 
with each $t_i$ mapping to $x_i^*$, the initial form of $x_i$ in $R^\g$. By the assumptions on $\x$, we have
   \[
    \H_{R^\g_+}^i(R^\g)\cong \H_{(\bm{t})}^i(R^\g)
   \]
   and  hence it follows from a theorem of Eisenbud--Goto (see, for example, \cite[Theorem~4.3.1]{Bruns/Herzog:1998}) that
   \[
  \reg(R^\g)=\max\{j\ges 0\mid \H_i(\mathrm{Kos}^{S}(\bm{t}; R^\g))_{i+j}\neq 0\text{ for some }i\}\,.
   \]
   Finally, it remains to observe that  $\mathrm{Kos}^{S}(\bm{t}; R^\g)\cong A^\g$ to justify the desired equality. 

   For the inequality, it follows from  \cite[Proposition~8.2.4]{Huneke/Swanson:2006} that
\[\ll_R(R/(\x))\les \min\{j\mid (R^\g/(\x^*))_{\ges j}= 0\}\,.\]
Now from the already established equality, using that $\H_0(A^\g)=R^\g/(\x^*),$ we obtain the desired inequality. 
\end{proof}

\begin{lemma}
\label{example_strict_cm}
    If $R$ is strict Cohen-Macaulay, then $\reg(R^\g)=\ll_R(R/(\x))-1$ for any superficial system of parameters $\x$ in $\m\smallsetminus\m^2$.  
\end{lemma}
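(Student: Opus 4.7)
The plan is to upgrade the inequality from \cref{l_bound} to an equality by exploiting the regularity of the sequence $\x^*$ on $R^\g$, which is the whole point of the strict Cohen-Macaulay assumption. Write $A=\mathrm{Kos}^R(\x)$ as in \cref{l_bound}.

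First, I would observe that the initial forms $\x^*$ form a regular sequence on $R^\g$. Indeed, since $\x$ is superficial, $\x^*$ is part of a system of parameters in $R^\g$; because the length of $\x$ equals $\dim R = \dim R^\g$, it is in fact a full system of parameters on $R^\g$. Strict Cohen-Macaulayness of $R$ then promotes this to a regular sequence.

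Second, using $A^\g\cong \mathrm{Kos}^S(\bm{t};R^\g)$ from the proof of \cref{l_bound}, regularity of $\x^*$ on $R^\g$ yields $\H_i(A^\g)=0$ for $i>0$ and $\H_0(A^\g)=R^\g/(\x^*)$. Hence the displayed formula for $\reg(R^\g)$ in \cref{l_bound} simplifies to
\[
\reg(R^\g)=\max\{\,j\ges 0\mid (R^\g/(\x^*))_j\neq 0\,\}\,.
\]

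Third, I would invoke the classical criterion of Valabrega--Valla: regularity of $\x^*$ on $R^\g$ gives the canonical identification $\gr_\m(R/(\x))\cong R^\g/(\x^*)$ as graded rings. Since $R/(\x)$ has finite length, Nakayama's lemma applied to the finitely generated module $\m^j(R/(\x))$ shows that $\m^j(R/(\x))=0$ as soon as $(\gr_\m(R/(\x)))_j=0$; consequently
\[
\ll_R(R/(\x))=1+\max\{\,j\ges 0\mid (R^\g/(\x^*))_j\neq 0\,\}\,.
\]
Combining this with the expression for $\reg(R^\g)$ above yields $\ll_R(R/(\x))-1=\reg(R^\g)$.

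The only nontrivial input is the Valabrega--Valla identification $\gr_\m(R/(\x))\cong R^\g/(\x^*)$, which is standard for sequences whose initial forms are regular in $R^\g$; everything else is bookkeeping with the Koszul complex and the formula already proved in \cref{l_bound}.
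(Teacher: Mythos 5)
Your proof is correct and follows essentially the same route as the paper: both arguments reduce to the identification $\gr_\m(R/(\x))\cong R^\g/(\x^*)$ (you via Valabrega--Valla after noting $\x^*$ is a regular sequence, the paper via Sally's lemma) together with the simplification of the regularity formula from \cref{l_bound} when the higher Koszul homology of $A^\g$ vanishes. The only differences are in the references invoked (Nakayama in place of the cited lemma of Hashimoto--Shida), not in the substance of the argument.
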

\begin{proof}
By \cite[Lemma~0.1]{Sally:1979}, since $R^\g$ is Cohen-Macaulay, for any superficial sequence $\x$ we have
\[
 (R/(\x))^\g\cong R^\g/(\x^*)\,.
\]
Hence it follows from \cite[Lemma~6.2]{Hashimoto/Shida:1997} and \cref{l_bound} that 
\[\ll_R(R/(\x))= \min\{j\mid (R^\g/(\x^*))_{\ges j}= 0\} = \reg(R^\g)+1\,.\qedhere\]
 \end{proof}

\section{Main result}\label{s_main_theorem}

This section is devoted to proving \cref{t_main_intro} from the introduction. In fact, we show the following stronger result. 

\begin{theorem}\label{new_main_theorem}
        If $R$ is strict Cohen-Macaulay with residue field $k$, then any nonzero $R$-module $M$ with $\Tor^R_{i}(M,k)=0$ for some $i\geqslant 0$ satisfies:
    \[\ll_R(M)\geqslant \reg(R^\g)+1\,.\]
\end{theorem}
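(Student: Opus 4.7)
The plan is to combine the two ingredients announced in the introduction: \cref{newlemma}, an abstract homological criterion asserting that the existence of certain auxiliary modules over $R$ forces a lower bound on $\ll_R(M)$ for every nonzero $M$ with $\Tor^R_i(M,k)=0$ for some $i$, and \cref{mainConstruction}, which uses the strict Cohen-Macaulay hypothesis to actually produce such auxiliary modules calibrated to the threshold $t=\reg(R^\g)$. Chaining these together delivers the first inequality $\ll_R(M)\ges \reg(R^\g)+1$ at once, and in particular does not require $M$ to be finitely generated, only the single Tor vanishing against $k$.

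For the ``furthermore'' clause, assume that $R$ has an infinite residue field. By \cref{c_ll_reduction} one may choose a superficial system of parameters $\x\subseteq\m\smallsetminus\m^2$, and \cref{example_strict_cm}, which uses the strict Cohen-Macaulay hypothesis through Sally's identity $(R/(\x))^\g\cong R^\g/(\x^*)$, gives $\ll_R(R/(\x))=\reg(R^\g)+1$. Since $\x$ is in particular a system of parameters on $R$, the definition of generalized Loewy length from \cref{Loewy_length} combined with the inequality already proved yields
\[
\gll_R(R)\les \ll_R(R/(\x)) = \reg(R^\g)+1 \les \ll_R(M)\,,
\]
as required.

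I expect the genuine difficulty to sit inside \cref{mainConstruction} rather than in the assembly above. The task there is to produce an $R$-module whose socle and $\m$-adic filtration faithfully witness $\reg(R^\g)$, and whose Tor behaviour against $k$ is rigid enough to be detected by a single vanishing hypothesis on $M$. The strict Cohen-Macaulay assumption should enter precisely to lift a degree-$\reg(R^\g)$ socle element of $R^\g/(\x^*)$ to a controlled element in $R$, again via Sally's identity. A secondary subtlety is that \cref{newlemma} must avoid finite-generation-only tools such as Nakayama or length arguments; instead I expect it to proceed by the derived-category rigidity that follows from vanishing of a single $\Tor^R_i(M,k)$.
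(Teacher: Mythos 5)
Your proposal follows essentially the same route as the paper: \cref{mainConstruction} supplies the lift $\sigma\colon F\to A$ with $\sigma(F)\subseteq\m^{\reg(R^\g)}A$, \cref{newlemma} converts its existence into the bound $\ll_R(M)\ges\reg(R^\g)+1$, and the furthermore clause is exactly the chain $\gll_R(R)\les\ll_R(R/(\x))=\reg(R^\g)+1\les\ll_R(M)$ via \cref{example_strict_cm}. The only step you elide is that for the first inequality over a possibly finite residue field one must first pass to a faithfully flat extension with infinite residue field (Loewy length can only decrease and regularity is unchanged, per \cite[Proposition~6.3]{Hashimoto/Shida:1997}), since \cref{mainConstruction} requires a superficial system of parameters in $\m\smallsetminus\m^2$, which is only guaranteed to exist when $k$ is infinite.
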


A consequence of the theorem affirms two conjectures of Corso--Huneke--Polini--Ulrich, shared with us in private correspondences, for strict Cohen-Macaulay rings. 

\begin{corollary} \label{mainTheorem}
 Suppose a local ring $R$ is strict Cohen-Macaulay. If $M$ is a nonzero Cohen-Macaulay $R$-module of finite projective dimension, then  
    \[ \gll_R(M) \ges \reg(R^\g)+1\,. \]
\end{corollary}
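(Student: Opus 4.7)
The plan is to invoke \cref{new_main_theorem} on each quotient $M/\x M$ and then take the infimum over systems of parameters $\x$ on $M$. To carry this out, I would first fix a system of parameters $\x = x_1,\ldots,x_d$ on $M$. Since $M$ is Cohen-Macaulay, $\x$ is $M$-regular, and Nakayama's lemma ensures that $M/\x M$ is nonzero.

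Next, I would check that $M/\x M$ meets the hypotheses of \cref{new_main_theorem}, namely nonzero and with $\Tor_i^R(M/\x M, k)=0$ for some $i$. Because $\pdim_R(M)<\infty$, pick a bounded free resolution $P_\bullet\xrightarrow{\simeq} M$ by finitely generated projectives. Tensoring $P_\bullet$ with the Koszul complex $\mathrm{Kos}^R(\x)$ produces a bounded complex of finitely generated free $R$-modules, and the $M$-regularity of $\x$ forces this complex to be a free resolution of $M/\x M$. In particular $\pdim_R(M/\x M)<\infty$, so $\Tor_i^R(M/\x M,k)=0$ for all $i\gg 0$.

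Applying \cref{new_main_theorem} to $M/\x M$ then yields
\[\ll_R(M/\x M)\ges \reg(R^\g)+1\,,\]
and, in the infinite residue field case, the same theorem gives $\ll_R(M/\x M)\ges \gll_R(R)$. Taking the infimum over all systems of parameters $\x$ on $M$ immediately produces the two claimed lower bounds on $\gll_R(M)$.

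The difficulty of this corollary is essentially absorbed into \cref{new_main_theorem}; the reduction performed here is the easy half, relying only on $M$ being Cohen-Macaulay so that every system of parameters becomes $M$-regular, which is precisely what lets the Koszul argument upgrade a finite free resolution of $M$ to one of $M/\x M$. I would expect no further obstacle once the theorem is in place.
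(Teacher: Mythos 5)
Your proposal is correct and follows the same route as the paper: reduce to the finite-length module $M/\x M$, which has finite projective dimension because $\x$ is $M$-regular ($M$ being Cohen-Macaulay), apply \cref{new_main_theorem}, and take the infimum over all systems of parameters. The paper's proof is identical in substance, merely asserting without the Koszul-complex elaboration that $M/\x M$ has finite projective dimension.
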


\begin{corollary}\label{perfectModules}
   For a strict Cohen-Macaulay ring $(R,\m)$ with infinite residue field, 
    \[
    \gll_R(R)=\ll_R(R/(\x))=\reg(R^\g)+1
    \]
    with $\x$ a sufficiently general system of parameters for $R$ in $\m\smallsetminus\m^2$. Consequently, if $M$ is a nonzero Cohen-Macaulay $R$-module of finite projective dimension, then \looseness -1
    \[
    \gll_R(M)\geqslant \gll(R)\,.
    \]
\end{corollary}

The proofs of these results are given at the end of the section.  For the remainder of the section we assume $R$ is a commutative noetherian local ring with unique maximal ideal $\m$ and residue field $k.$ The following lemma is one of the major inputs in what follows.

\begin{lemma}
    \label{newlemma}
    Assume $N$ is an $R$-module with finite Loewy length. Fix $F\xra{\simeq} k$ and $G\xra{\simeq} N$, free resolutions over $R$. If there exist nonnegative integers $n,d$ and a lift $\sigma\colon F\to G$ of a nonzero map $k\to N$ with $\sigma(F_i)\subseteq \m^n G_i$ for all $i\leqslant d$, then for any nonzero $R$-module $M$ and $\Tor^R_{d+1}(M,k)=0$  we have $\ll_R(M)\ges n+1.$
\end{lemma}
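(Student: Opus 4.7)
The plan is to argue by contrapositive: assume $\ll_R(M) \le n$, equivalently $\m^n M = 0$, and derive $M = 0$. First, exploit that each $G_i$ is free, so the hypothesis $\sigma(F_i) \subseteq \m^n G_i$ means every matrix entry of $\sigma_i$ lies in $\m^n$. Combined with $\m^n M = 0$, this forces the tensored chain map $\sigma \otimes_R \id_M \colon F \otimes_R M \to G \otimes_R M$ to be identically zero in homological degrees $i \le d$. Consequently, the map $\iota_*^i \colon \Tor_i^R(k, M) \to \Tor_i^R(N, M)$ induced by $\iota$ vanishes for every $i \le d$.

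Since $\iota$ is nonzero and $k$ is simple, $\iota$ is injective; set $N' := N/\iota(k)$, which has finite Loewy length. The long exact Tor sequence attached to $0 \to k \xra{\iota} N \to N' \to 0$, combined with the vanishing above and the hypothesis $\Tor_{d+1}^R(k, M) = 0$, yields short exact sequences
\[
0 \to \Tor_i^R(N, M) \to \Tor_i^R(N', M) \to \Tor_{i-1}^R(k, M) \to 0 \qquad (1 \le i \le d+1).
\]
An analysis of the mapping cone $C := \cone(\sigma)$ — a free resolution of $N'$ whose tensor $C \otimes_R M$ has block-diagonal differential in degrees $\le d+1$ — confirms and refines these splittings at the chain level.

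To close the argument, I would track the socle element $s := \iota(1)$, which satisfies $s = \pi_G(\sigma_0(1)) \in \m^n N$ from the hypothesis at $i = 0$, so that the finite-length test module $N$ has a nonzero socle element sitting deep in $\m^n N$. The plan is then to iterate: the mapping cone $C$ produces a new free resolution and, by analyzing its low-degree structure, provides a lift of a well-chosen socle element of $N'$ satisfying analogous smallness bounds. Since $\ll_R(N') \le \ll_R(N)$, and in the cases of interest one expects strict decrease, an induction on the Loewy length of the test module, combined with $\Tor_0^R(k, M) = M/\m M \neq 0$ (which holds whenever $M \neq 0$ and $\m^n M = 0$), should propagate a nonvanishing class through the splittings of the second paragraph and ultimately contradict $\Tor_{d+1}^R(k, M) = 0$.

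The hard part is this last step. The short exact sequences and the cone splittings above are formally consistent with the long exact sequence: the extension splits, and no immediate numerical incompatibility emerges. So the contradiction must use both the nontriviality of $\iota$ (recorded in the deep socle element $s$) and the finiteness of $\ll_R(N)$ simultaneously, rather than either in isolation. Making the iteration precise — in particular, identifying a well-behaved socle element of $N'$ whose lift inherits the bound $\sigma'(F_i) \subseteq \m^n C_i$ for $i \le d$ from the original $\sigma$ via the cone — is where I expect the technical heart of the proof to lie.
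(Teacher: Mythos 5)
Your setup is on the right track and matches the paper's mapping-cone computation: the short exact sequences
\[
0 \to \Tor^R_{i}(M,N) \to \Tor^R_{i}(M,N') \to \Tor^R_{i-1}(M,k) \to 0 \qquad (1\le i\le d+1)
\]
are correct (at $i=d+1$ the left-hand injectivity uses the hypothesis $\Tor^R_{d+1}(M,k)=0$). The gap is in how you propose to close the argument. The iteration you sketch --- finding a socle element of $N'$ whose lift into $\cone(\sigma)$ again lands in $\m^n$ times the resolution, and inducting on the Loewy length of the \emph{test} module --- is both unnecessary and unlikely to work: there is no reason a socle element of $N'$ admits such a deep lift, and the whole point of the hypothesis is that $N$ comes equipped with one by construction.

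The missing idea is a d\'evissage in the second variable. Since $\Tor^R_{d+1}(M,k)=0$ and Tor commutes with direct sums, $\Tor^R_{d+1}(M,-)$ vanishes on all semisimple modules, and then, by the long exact sequence and induction on Loewy length, on \emph{every} module of finite Loewy length --- in particular on $N'$. Feeding $\Tor^R_{d+1}(M,N')=0$ into your sequence at $i=d+1$ forces $\Tor^R_d(M,k)=0$. Because the hypothesis $\sigma(F_i)\subseteq\m^nG_i$ holds for every index $\le d$, the same argument runs with $d$ replaced by any smaller integer, so one descends to $\Tor^R_0(M,k)=M/\m M=0$; but $\m^nM=0$ and $M\ne0$ give $M\ne\m M$, a contradiction. (Equivalently, replace $d$ at the outset by the least index with $\Tor^R_{d+1}(M,k)=0$, so that $\Tor^R_d(M,k)\ne0$.) With this step inserted your argument is complete; no analysis of the socle of $N'$ and no new lift are needed.
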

\begin{proof}
    Let $N'$ denote the cokernel of the nonzero map $k\to N$, and set $C=\cone(\sigma).$ First, we claim that $C$ is a free resolution of $N'$. Indeed, the cone exact sequence 
\[
0\to G\to C\to \shift F\to 0
\]
induces the exact sequence 
\[
 \cdots \to 0 \to 0\to\H_2(C)\to 0\to 0\to \H_1(C)\to \H_0(F)\xra{\H_0(\sigma)}\H_0(G)\to \H_0(C)\to 0\,.
\]
In particular, $\H_i(C)=0$ for $i>1$.
Furthermore, note that $\H_0(\sigma)$ is exactly the given map $k\to N$, and so 
\[
\H_i(C)=\begin{cases} N'& i=0\\ 0 & \text{otherwise}\end{cases}\,.
\]
Finally, as $F$ and $G$ are nonnegatively free graded $R$-modules, we conclude that $C$ is a free resolution of $N'.$

Now assume $M$ is a nonzero $R$-module with $\ll_R(M)\leqslant n$ and $\Tor^R_{d+1}(M,k)=0.$ We can further assume $\Tor^R_d(M,k)\neq 0$. Also, by inducting on Loewy length it follows that  $\Tor^R_{d+1}(M,-)$ vanishes on the category of $R$-modules having finite Loewy length.

As $\ll_R(M)\leqslant n$  and $\sigma(F_i)\subseteq \m^n G_i$ for all $i\leqslant d$, we have the third isomorphism below for $j\leqslant d+1$, while the others are obvious: 
\begin{align*}
    \Tor^R_j(M,N')&\cong\H_j(M\otimes_R C)\\
    &\cong \H_j(\cone(M\otimes_R \sigma)) \\
    &\cong \H_j(\cone(M\otimes_R F\xra{0}M\otimes_R G))\\
    &=\H_j( M\otimes_RG\oplus \shift(M\otimes_R F))\\
    &\cong \H_j(M\otimes_RG)\oplus \shift \H_j(M\otimes_R F)\\
    &\cong\Tor^R_j(M,N)\oplus  \Tor^R_{j-1}(M,k)\,.
\end{align*}
 However, observe that for $j=d+1$ the isomorphisms above yield
\[
\Tor_{d+1}^R(M,N')\cong \Tor^R_{d+1}(M, N)\oplus \Tor_d^R(M,k)\,,
\]
 giving us a contradiction, since $\Tor^R_{d+1}(M,N')=0$ as $N'$ has finite Loewy length. Therefore, $\ll_R(M)\ges n+1$, as desired. 
\end{proof}

We remark that when  getting a lower bound on the Loewy length of a nonzero module of \emph{finite length} and finite projective dimension, one can modify the end of the previous proof to relax the hypothesis on $N$ in \cref{newlemma}; instead of inducting on Loewy length, the Auslander--Buchsbaum formula can be applied. Namely, we  also have the following. 

\begin{lemma}
    \label{newlemma'}
    Fix $F\xra{\simeq} k$ and $G\xra{\simeq} N$, free resolutions over $R$. 
    If there exist nonnegative integers $n,d$ and a lift $\sigma\colon F\to G$ of a nonzero map $k\to N$ satisfying that $\sigma(F_i)\subseteq \m^n G_i$ for all $i\leqslant d$, then for any nonzero finitely generated $R$-module $M$ with $\pdim_R(M)\les d$ we have $\ll_R(M)\ges n+1.$ \qed
\end{lemma}

We note that the assumption in \cref{newlemma} on $M$, when it has finite Loewy length, implies that $M$ has flat dimension at most $d$. Namely, we have the following proposition. Stating \cref{newlemma} in its current form, makes for finitely many things to check when attempting to apply the lemma. 

\begin{proposition}
    If $M$ has finite Loewy length and $\Tor^R_{d+1}(M,k)=0$, then $\mathrm{flatdim}_R(M)\les d.$
\end{proposition}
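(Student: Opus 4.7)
The plan is to induct on $n := \ll_R(M)$. In the base case $n \leq 1$, either $M = 0$ (trivial) or $M$ is a nonzero direct sum of copies of $k$, so the hypothesis $\Tor^R_{d+1}(M,k) = 0$ forces $\Tor^R_{d+1}(k,k) = 0$; since $k$ is finitely generated, the minimality of its free resolution over $R$ yields $\pdim_R(k) \leq d$, and hence $\fdim_R(M) \leq d$ (using that flat dimension is preserved by direct sums).

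In the inductive step $n \geq 2$, I would use the short exact sequence
\[0 \to \m M \to M \to M/\m M \to 0,\]
in which $\m M$ has Loewy length at most $n-1$ and $M/\m M$ is a direct sum of copies of $k$. Since flat dimension is subadditive along short exact sequences, it suffices to bound $\fdim_R(\m M) \leq d$ and $\fdim_R(M/\m M) \leq d$; by the inductive hypothesis on $\m M$ and the base case on $M/\m M$, this further reduces to checking the Tor-vanishing hypothesis for each of these two pieces.

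The main obstacle is propagating the vanishing of $\Tor^R_{d+1}(M,k)$ to both $\Tor^R_{d+1}(\m M,k)$ and $\Tor^R_{d+1}(M/\m M,k)$, which the long exact sequence of Tor does not yield directly (it only shows that the first is a quotient of $\Tor^R_{d+2}(M/\m M, k)$ and that the second injects into $\Tor^R_d(\m M, k)$). My plan to overcome this is to strengthen the inductive claim to a Tor-rigidity statement: for $M$ of Loewy length at most $n-1$, vanishing of $\Tor^R_{d+1}(-,k)$ propagates to vanishing of $\Tor^R_i(-,k)$ for all $i \geq d+1$. Such rigidity holds automatically for finitely generated modules via the minimality of their free resolutions, and it can be transferred to the general (not-necessarily-finitely-generated) setting by writing $M = \varinjlim M_\alpha$ as a filtered colimit of its finitely generated submodules (each of finite length, since $M_\alpha$ is finitely generated and annihilated by $\m^n$) and using that Tor commutes with filtered colimits. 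With this strengthened hypothesis in hand, the long exact sequence cleanly propagates vanishing to both $\m M$ and $M/\m M$, closing the induction.
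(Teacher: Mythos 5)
Your base case is fine, but the inductive step rests on a reduction that is genuinely false, not merely hard to verify. Splitting $M$ as $0\to \m M\to M\to M/\m M\to 0$ and bounding the flat dimension of the two pieces cannot work, because the pieces typically fail the Tor-vanishing hypothesis (equivalently, have infinite flat dimension) even when $M$ satisfies it. Concretely, take $R=k\llbracket x,y\rrbracket/(xy)$ and $d=1$, and let $M=R/(x+y)\cong k\llbracket x\rrbracket/(x^2)$: here $x+y$ is a parameter on the one-dimensional Cohen--Macaulay ring $R$, so $\pdim_R(M)=1$ and $\Tor^R_2(M,k)=0$, while $\m M\cong k$ and $M/\m M\cong k$ satisfy $\Tor^R_2(-,k)\cong\Tor^R_2(k,k)\neq 0$ since $R$ is singular. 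So the statement you are trying to propagate to the subobject and the quotient is false, and subadditivity of flat dimension along the sequence gives you nothing when both outer terms have infinite flat dimension. This is why the paper keeps $M$ fixed and inducts on the \emph{second} argument of $\Tor$ instead: first on length (where the relevant short exact sequences have $k$ as the quotient of the second variable, so the hypothesis $\Tor^R_{d+1}(M,k)=0$ applies directly), then on Krull dimension, using $0\to\Gamma_\m(L)\to L\to L'\to 0$, a regular element on $L'$, and the fact that $\Tor^R_{d+1}(M,L)$ inherits finite Loewy length from $M$ so that $T=\m T$ forces $T=0$.

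The auxiliary rigidity argument is also broken. From $\Tor^R_{d+1}(M,k)=\varinjlim_\alpha\Tor^R_{d+1}(M_\alpha,k)=0$ you cannot conclude that any individual $\Tor^R_{d+1}(M_\alpha,k)$ vanishes --- only that every class eventually dies under the transition maps --- so the finitely generated rigidity (which requires vanishing for the module itself, via minimal resolutions) cannot be applied to the terms $M_\alpha$, and there is no mechanism relating the vanishing of the colimit in degree $d+1$ to its vanishing in degree $d+2$. The rigidity statement you want is true for modules of finite Loewy length, but as a \emph{consequence} of the proposition, not as an ingredient obtainable by passing to finitely generated submodules.
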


\begin{proof}
 It suffices to show $\Tor^R_{d+1}(M,-)$ vanishes on the category of finitely generated $R$-modules. We verify this by induction on Krull dimension.  
 
 For the base case, a straightforward induction argument on length yields that $\Tor^R_{d+1}(M,-)$ vanishes on the category of finite length $R$-modules. 
 
 Now if $L$ is a finitely generated $R$-module of positive Krull dimension, the exact sequence
    \[
    0\to \Gamma_\m(L)\to L\to L'\to 0
    \]
    induces an inclusion $\Tor_{d+1}^R(M,L)\hookrightarrow \Tor^R_{d+1}(M,L')$; here we have used that $\Tor^R_{d+1}(M,\Gamma_\m(L))=0$. Therefore, we can assume $L$ has positive depth, and so there exists an $L$-regular element $x$.  From this, and the inductive hypothesis applied to $L/xL$, we obtain the surjective map 
    \[
    \Tor^R_{d+1}(M,L)\xra{x\cdot}\Tor^R_{d+1}(M,L)\,,
    \]
    and so $\Tor^R_{d+1}(M,L)\subseteq \m \Tor^R_{d+1}(M,L)$. However, since $\Tor^R_{d+1}(M,L)$ has finite Loewy length it follows that $\Tor^R_{d+1}(M,L)=0.$
\end{proof}

The next lemma abstracts a discussion in \cite[Section~2]{Sega:2013}, building on work in \cite{Herzog/Iyengar:2005}. It is also related to the main theorem of \cite{Trung:1998}, as well as \cite[Section~3]{Avramov/Iyengar/Miller:2006}.

\begin{lemma}
     \label{p_linearitydefect}
    Suppose $(R,\m)$ is a local ring, $\x$ is a superficial system of parameters in $\m\smallsetminus \m^2$, and set $A=\mathrm{Kos}^R(\x)$. Then for $i\ges 0$ and $j \ges \reg(R^\g)$ we have 
    \[ \partial(A_{i+1}) \cap \m^{j+1}A_{i} = \partial(\m^{j} A_{i+1})\,. \] 
\end{lemma}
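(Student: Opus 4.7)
My plan is to establish both inclusions. The containment $\partial(\m^j A_{i+1}) \subseteq \partial(A_{i+1}) \cap \m^{j+1} A_i$ is immediate since the Koszul differential $\partial$ has entries in $\m$, so $\partial(A_{i+1}) \subseteq \m A_i$ and hence $\partial(\m^j A_{i+1}) \subseteq \m^{j+1} A_i$. For the reverse inclusion, given $y \in \partial(A_{i+1}) \cap \m^{j+1} A_i$, the strategy is to approximate a preimage of $y$ in $\m^j A_{i+1}$ by successive corrections, each step invoking the regularity bound provided by \cref{l_bound}.

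Specifically, I would construct inductively on $n \ges 0$ elements $w_n \in \m^{j+n}A_{i+1}$ such that the remainders $y_n \coloneqq y - \partial\!\bigl(\sum_{k < n} w_k\bigr)$ satisfy $y_n \in \m^{j+n+1}A_i$. At the $n$th step $y_n$ is automatically a $\partial$-cycle (since $\partial(y_n) = 0$), so its class $\bar y_n$ in $A^\g_{i,\, i+j+n+1} = \m^{j+n+1}A_i/\m^{j+n+2}A_i$ is annihilated by the linear differential $\partial^\g$. Because the hypothesis $j \ges \reg(R^\g)$ forces $j + n + 1 > \reg(R^\g)$ for every $n \ges 0$, \cref{l_bound} yields $\H_i(A^\g)_{i+j+n+1} = 0$; thus $\bar y_n = \partial^\g(\bar w_n)$ for some $\bar w_n \in A^\g_{i+1,\, i+j+n+1} = \m^{j+n}A_{i+1}/\m^{j+n+1}A_{i+1}$, and any lift $w_n \in \m^{j+n}A_{i+1}$ of $\bar w_n$ supplies $y_{n+1} = y_n - \partial(w_n) \in \m^{j+n+2} A_i$, continuing the induction.

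To conclude, the partial sums $\sum_{k \les n} w_k$ form a Cauchy sequence in $\m^j A_{i+1}$ for the $\m$-adic topology, so they converge in the completion $\widehat{A_{i+1}}$ to an element $\widehat w \in \m^j \widehat{A_{i+1}}$ satisfying $\widehat\partial(\widehat w) = y$ in $\widehat{A_i}$. Faithful flatness of $R \to \widehat R$, together with the finite generation of the relevant $R$-modules, then forces $y \in \partial(\m^j A_{i+1})$, completing the argument.

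The principal obstacle is arranging the regularity input so that \cref{l_bound} applies at every step of the iteration: the inequality $j + n + 1 > \reg(R^\g)$ required to produce the boundary $\bar w_n$ is exactly what $j \ges \reg(R^\g)$ guarantees for all $n \ges 0$. The final completion-and-descent step is a routine technical maneuver, but it is unavoidable since the series $\sum_n w_n$ need not terminate inside $A_{i+1}$.
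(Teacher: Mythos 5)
Your proof is correct and takes essentially the same route as the paper's: the same successive-approximation scheme, correcting at each stage by the boundary of an element of $\m^{j+n}A_{i+1}$ obtained from the vanishing of $\H_i(A^\g)$ in internal degrees exceeding $\reg(R^\g)+i$, which is exactly what \cref{l_bound} supplies. The only cosmetic difference is that the paper completes $R$ at the outset so the series $\sum u_\ell$ converges in $A_{i+1}$ itself, whereas you complete at the end and descend by faithful flatness; both are valid.
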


\begin{proof}
There is no harm in completing $R$ to show the desired equality. 
As $A$ is a minimal complex, it is clear that $\partial(\m^{j} A_{i+1})\subseteq \partial(A_{i+1}) \cap \m^{j+1}A_{i}. $ 

For the reverse containment, consider $\del x\in \partial(A_{i+1}) \cap \m^{j+1}A_{i}.$ Its image in $A^\g$ is a cycle in $A^\g_{i,i+j+1}$, and so it defines the following homology class
\[[\del x]\in \H_i(A^\g)_{i+j+1}\,.\]
Since $j\ges \reg(R^\g)$,  by \cref{l_bound}, the homology class $[\del x]$ is zero. That is to say, there exists $u_0\in \m^{j}A_{i+1}$ such that 
\[
\del^A(x)-\del^A(u_0)=\del^A(x-u_0)\in \m^{j+2}A_i\,.
\]
Repeating the argument above produces $u_n\in \m^{j+n}A_{i+1}$ with 
\[
\del^A(x)-\del^A\left(\sum_{\ell=0}^nu_\ell\right)=\del^A\left(x-\sum_{\ell=0}^nu_\ell\right)\in \m^{j+n+2}A_i\,.
\]
Since $R$ is complete, we can let $u=\sum_{\ell=0}^\infty u_\ell$, and by construction $\del^A(x)=\del^A(u)$ with $u\in \m^{j}A_{i+1}$.
\end{proof}

    For the remainder of the section we can assume the residue field $k$ is infinite; the fact that Loewy length decreases and regularity is unchanged among passage to an infinite residue field is due to \cite[Proposition~6.3]{Hashimoto/Shida:1997} and that local cohomology is invariant under flat base change, respectively. We further assume that $R$ is Cohen-Macaulay, as this is the setting of \cref{new_main_theorem}.

 \begin{chunk} 
    Assuming the setup above, there exists a superficial system of parameters $\x$ in $\m\smallsetminus\m^2$; moreover, as $R$ is Cohen-Macaulay, $\x$ is a maximal regular sequence of $R$.  Set $\overline{R}=R/(\x)$ and let $A$ be the Koszul complex on $\x$ over $R$. Note that $A$ is a minimal free resolution of $\overline{R}$ over $R$. Also, fix a minimal free resolution $F\xra{\simeq} k$ over $R$.
    
    Let $\bar s$ denote a nonzero element of highest $\m \overline{R}$-adic order in the socle of $\overline{R}$; cf. \cref{c_order}. That is to say, $\bar s$ is a nonzero element in  $\m^n\overline{R}$ where the Loewy length of $\overline{R}$ is $n+1$.  Moreover, a standard lifting property guarantees the existence of a lift between the complexes of $R$-modules below: 
    \begin{center}
        \begin{tikzcd}
	F & A \\
	k & {\overline R}
	\arrow["\simeq"', from=1-1, to=2-1]
	\arrow["1\mapsto {\bar s}", from=2-1, to=2-2]
	\arrow["\exists \ \sigma", dashed, from=1-1, to=1-2]
	\arrow["\simeq", from=1-2, to=2-2]
\end{tikzcd}\,.
    \end{center}
In what follows we construct a particular lift $\sigma \colon F\to A$ with $\sigma(F)\subseteq \m^n A$ provided that $\reg(R^\g)=\ll_R(\overline{R})-1$. 
\end{chunk}

\begin{lemma}
\label{p_construction}
\label{mainConstruction}
Assume $\reg(R^\g)=\ll_R(R/(\x))-1$ for some superficial system of parameters $\x$ in $\m\smallsetminus\m^2$. Set  $n=\reg(R^\g)$ and let $F$ denote a minimal $R$-free resolution of $k$. Also, let $\overline{R}=R/(\x)$ whose minimal free resolution over $R$ is  $A=\mathrm{Kos}^R(\x)$.

For any nonzero $\overline{s}\in \m^n\overline{R}$, the  $R$-module map $k\mapsto \overline{R}$ given by $1\mapsto \overline{s}$ admits a lift $\sigma\colon F\to A$  with $\sigma(F)\subseteq \m^n A$.
\end{lemma}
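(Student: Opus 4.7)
The plan is to construct the lift $\sigma$ one homological degree at a time, using Lemma~\ref{p_linearitydefect} at each stage to guarantee that the chosen preimages live inside $\m^n A$.

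First, I would fix any lift $s\in\m^n$ of $\overline{s}$ and define $\sigma_0\colon F_0\to A_0$ to be multiplication by $s$. Since $\overline{sa}=\overline{a}\cdot\overline{s}$ in $\overline{R}$ for every $a\in R=F_0$, this lifts the augmentation map sending $1\mapsto\overline{s}$, and clearly $\sigma_0(F_0)\subseteq\m^n A_0$. It is worth recording that the hypothesis $\reg(R^\g)=\ll_R(\overline{R})-1=n$ is equivalent to $\m^{n+1}\subseteq(\x)$, a fact that will be used below.

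For the inductive step, suppose $\sigma_0,\ldots,\sigma_i$ have been constructed satisfying the chain-map identity $\partial^A\sigma_j=\sigma_{j-1}\partial^F$ and $\sigma_j(F_j)\subseteq\m^n A_j$ for $j\leqslant i$. Given a generator $e\in F_{i+1}$, minimality of $F$ gives $\partial^F(e)\in\m F_i$, so
\[
\sigma_i\partial^F(e)\in\m\cdot\sigma_i(F_i)\subseteq\m\cdot\m^n A_i=\m^{n+1}A_i.
\]
Moreover, $\sigma_i\partial^F(e)$ lies in $\partial^A(A_{i+1})$: for $i\geqslant 1$ it is a cycle (since $\partial^A\sigma_i=\sigma_{i-1}\partial^F$ and $(\partial^F)^2=0$) and $A$ is acyclic in positive degrees, while for $i=0$ this follows from $\m^{n+1}\subseteq(\x)=\partial^A(A_1)$. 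Hence $\sigma_i\partial^F(e)\in\partial^A(A_{i+1})\cap\m^{n+1}A_i$.

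The decisive step is then to apply Lemma~\ref{p_linearitydefect} with $j=n=\reg(R^\g)$, which upgrades this intersection to $\partial^A(\m^n A_{i+1})$. This produces some $v\in\m^n A_{i+1}$ with $\partial^A(v)=\sigma_i\partial^F(e)$; setting $\sigma_{i+1}(e)=v$ and extending $R$-linearly over a basis of $F_{i+1}$ completes the inductive step. I expect the main obstacle to be exactly the one packaged into Lemma~\ref{p_linearitydefect}: without that improvement, a lift of $\sigma_i\partial^F(e)$ certainly exists by the acyclicity of $A$, but with no control over its $\m$-adic order, so the inductive hypothesis would immediately collapse. With that lemma in hand, the construction proceeds mechanically.
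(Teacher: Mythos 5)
Your proof is correct and follows essentially the same route as the paper: an inductive construction of $\sigma$ degree by degree, with $\sigma_0$ given by multiplication by a lift $s\in\m^n$ of $\overline{s}$, minimality of $F$ pushing $\sigma_i\partial^F(F_{i+1})$ into $\m^{n+1}A_i\cap\partial^A(A_{i+1})$, and Lemma~\ref{p_linearitydefect} (with $j=n$) providing preimages in $\m^nA_{i+1}$. Your explicit treatment of the $i=0$ case via $\m^{n+1}\subseteq(\x)=\partial^A(A_1)$ is a welcome extra bit of care that the paper's appeal to exactness of $A$ glosses over.
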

\begin{proof}
First, we inductively construct $\sigma$. In degree zero, let $\sigma_0\colon F_0=R\to R=A_0$ be given by multiplication by $s$ where the image of $s$ in $\overline{R}$ is $\bar s$ and $s\in \m^n.$ Now assume we have constructed $\sigma_0,\ldots,\sigma_{i}$ with 
 \[
 \sigma_j(F_j)\subseteq \m^n A_j\quad \text{and}\quad \sigma_{j-1}\del^F_j=\del^A_{j-1} \sigma_j
 \]
 for each $j=0,\ldots,i.$ Observe that 
 \[\sigma_{i}\del_{i+1}^F(F_{i+1})\subseteq \m^{n+1}A_{i}\cap \ker\del^A_{i}=\m^{n+1}A_{i}\cap \del^A(A_{i+1})
 \]
 where the first containment holds by the inductive hypothesis and the equality holds since $A$ is exact. As a consequence, combined with \cref{p_linearitydefect}, we obtain 
 \[
 \sigma_{i}\del_{i+1}^F(F_{i+1})\subseteq \del^A(\m^{n}A_{i+1})\,;
 \]
this is where the assumption that $\reg(R^\g)=\ll_R(R/(\x))-1$ is used. 
Hence by the  lifting property of a free $R$-module, there exists an $R$-linear map $\sigma_{i+1}\colon F_{i+1}\to A_{i+1}$ such that 
 \[
 \sigma_{i}\del^F_{i+1}=\del^A_{i+1}\sigma_{i+1}\quad\text{and}\quad \sigma_{i+1}(F_{i+1})\subseteq \m^n A_{i+1}\,.
 \]
 This completes the construction of the lift $\sigma\colon F\to A$ satisfying $\sigma(F)\subseteq \m^nA$.
\end{proof}

We are now ready to prove the main results of the section. 

\begin{proof}[Proof of \cref{new_main_theorem}]
    As mentioned above, we can assume $R$ is strict Cohen-Macaualay with infinite residue field $k$. Hence,  $\reg(R^\g)=\ll_R(R/(\x))-1$ for some (in fact, any) superficial system of parameters $\x$ in $\m\smallsetminus\m^2$; cf.\@ \cref{c_ll_reduction,example_strict_cm}. Set $\overline{R}=R/(\x)$, and let $A=\mathrm{Kos}^R(\x)$, its minimal $R$-free resolution, and let $F$ denote a minimal $R$-free resolution of $k$. By \cref{p_construction}, there exists a chain map $\sigma\colon F\to A$, lifting a nonzero map of $k\to \overline{R}$, with $\sigma(F)\subseteq \m^{\reg(R^\g)} A$.

    Now we are exactly in the context \cref{newlemma}. Hence, for any nonzero  $R$-module $M$ with $\Tor^R_{i}(M,k)=0$ for some $i\ges 0$ (for example, if $M$ has finite flat dimension) we can apply \cref{newlemma} to conclude that 
    \[
    \ll_R(M)\ges\reg(R^\g)+1\,.\qedhere
    \]
\end{proof}

\begin{proof}[Proof of \cref{mainTheorem}]

Assume that $M$ is a nonzero Cohen-Macaulay module having finite projective dimension over the strictly Cohen-Macaulay ring $R$. For any system of parameters $\x$ on $M$, the $R$-module $M/(\x) M$ is a nonzero finite length module of finite projective dimension. Now one need only apply  \cref{new_main_theorem} to $M/(\x) M$ to deduce 
    \[\ll_R(M/(\x)M)\ges \reg(R^\g)+1\,,\]
    and when $R$ has infinite residue field $\ll_R(M/(\x)M)\ges \gll_R(R)$. Since the inequalities above hold for all system of parameters $\x$ on $M$, we obtain the desired inequalities. 
\end{proof}

\begin{proof}[Proof of \cref{perfectModules}]
    As the residue field is infinite, a sufficiently general system of parameters $\x$ in $\m\smallsetminus\m^2$ is, in particular, superficial; see \cite[Theorem 8.6.6]{Huneke/Swanson:2006}. Since $R$ is strict Cohen-Macaulay, from \cref{example_strict_cm} we have $\ll_R(R/(\x)) = \reg(R^\g)+1$ and from the inequality in \cref{mainTheorem} (applied for $M=R$), we obtain \[\gll_R(R) = \ll_R(R/(\x))=\reg(R^\g)+1\,.\qedhere\]
\end{proof}

\begin{remark}
\label{remark_min_reg}
 For a local ring $(R,\m)$, we say $R$ has  \emph{minimal regularity} if there exists a faithfully flat extension $R \to (S,\mathfrak{n})$ with $S/\m S$ a field and \[\reg(S^\g)=\ll_{S}(S/(\x))-1\] for some superficial system of parameters $\x$ in $\mathfrak{n}\smallsetminus \mathfrak{n}^2$.   Tracking through the proof of \cref{new_main_theorem} and noting \cite[Proposition 6.3]{Hashimoto/Shida:1997}, one sees that the conclusion of \cref{new_main_theorem} holds for Cohen-Macaulay rings having minimal regularity; this applies to its corollaries as well. It is easy to provide examples of rings having minimal regularity that are not strict Cohen-Macaulay. However, we do not know examples of \emph{Cohen-Macaulay} local rings having minimal regularity that are not strict Cohen-Macaulay. \looseness -1

 When $R$ is Cohen-Macaulay with an infinite residue field, having minimal regularity is an intrinsic property. That is to say, a local ring $(R,\m)$ with infinite residue field has minimal regularity if and only if $\reg(R^\g)=\ll_R(R/(\x))-1$ for some  superficial system of parameters $\x$ in $\m\smallsetminus\m^2$. Indeed, the backwards direction is clear, while the forward direction is actually a consequence of the fact that the conclusions of \cref{new_main_theorem} hold for Cohen-Macaulay rings of minimal regularity. 
\end{remark}
\section{Generalized Loewy length along flat extensions}\label{s_loewy_conjecture}

 A large amount of research (see, for example, \cite{Huneke/Ma/Quy/Smirnov:2020,Ma:2014, Ma:2017, Ma:2023} and the references therein) in commutative algebra has been motivated by the longstanding conjecture of Lech~\cite{Lech:1960}: For a flat local extension $R\to S$, one has the following inequality on Hilbert-Samuel multiplicities:
 \[e(R)\les e(S)\,.\]
 Ma has established the conjecture for equicharacterisitc rings of dimension at most three \cite{Ma:2017}, and for all standard graded algebras localized at their homogeneous maximal ideal \cite{Ma:2023}. In light of \cref{r_loewy} and Lech's conjecture,  we are led to (perhaps optimistically) conjecture the following.

 \begin{conjecture}\label{conjecture_loewy_lech}
     If $R\to S$ is a flat local extension between Cohen-Macaulay rings with infinite residue fields, then $\gll_R(R)\les \gll_S(S).$
 \end{conjecture}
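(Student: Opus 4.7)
The plan is to treat this conjecture as a Loewy-length analog of Lech's conjecture, building on \cref{intro_cor_gll}, which already settles the case where $R$ is strict Cohen-Macaulay and $\fdim_R(S)<\infty$. First I would reduce to the situation in which both $R$ and $S$ are complete with infinite residue fields; this is standard and preserves $\gll$ in view of \cite[Proposition~6.3]{Hashimoto/Shida:1997}, while flatness is stable under such base change. The remaining task is then to drop the two hypotheses present in \cref{intro_cor_gll}: strict Cohen-Macaulayness of $R$ and finite flat dimension of $S$ over $R$.

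Next, fix a system of parameters $\bm{y}=(y_1,\ldots,y_n)$ of $S$ achieving $\gll_S(S)$ and set $m=\ll_S(S/\bm{y}S)$, so $\n^m\subseteq(\bm{y})$ in $S$, where $\n$ is the maximal ideal of $S$. The goal is to produce a system of parameters $\bm{x}$ of $R$ with $\ll_R(R/\bm{x}R)\les m$. Since $R\to S$ is faithfully flat, a partial system of parameters of $R$ lifts to a partial system of parameters of $S$ and the closed fibre $S/\m S$ has dimension $n-\dim R$; one can therefore attempt to choose $\bm{y}$ with its first $\dim R$ entries coming from a system of parameters $\bm{x}$ of $R$ and the remaining entries forming a system of parameters of $S/\m S$. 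Combined with the injection $R/\bm{x}R\hookrightarrow S/\bm{x}S$ of $R$-modules arising from faithful flatness, and the trivial bound $\ll_R(N)\les\ll_S(N)$ for any $S$-module $N$, this framework would yield the desired inequality provided the Loewy-length data transfers cleanly.

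The main obstacle, in my view, is the failure of additivity for Loewy length. Lech's argument for multiplicity hinges on the associativity formula $e(S)=e(R)\cdot e(S/\m S)$, and no analogous factorization of $\gll_S(S)$ through $\gll_R(R)$ and the closed fibre exists; tracking $\n$-adic powers through a system of parameters of the closed fibre typically introduces slack that can defeat the intended inequality. Moreover, without a strict Cohen-Macaulay hypothesis on $R$, the regularity-based lower bound of \cref{new_main_theorem} is unavailable. A potential workaround is to refine the construction of \cref{mainConstruction} to produce a cycle in $\mathrm{Kos}^S(\bm{y})$ that detects $\gll_S(S)$ and lift it to a chain map from the minimal $R$-resolution of $k$; then \cref{newlemma}, applied with $M=(R/\bm{x}R)\otimes_R S$ or a similar $S$-module of finite flat dimension, would close the argument. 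Producing such a lift in the absence of strict Cohen-Macaulayness of $R$ is where I expect the genuinely new input to be required, and it may well be that the conjecture in this generality is out of reach without substantial further ideas.
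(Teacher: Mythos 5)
The statement you were asked to prove is posed in the paper as a conjecture (\cref{conjecture_loewy_lech}), and the paper does not prove it: it only establishes the special case where $R$ is strict Cohen-Macaulay and $\fdim_R(S)<\infty$ (the theorem in \cref{s_loewy_conjecture}, i.e.\ \cref{intro_cor_gll}). Your closing assessment — that genuinely new input is needed to remove those two hypotheses — is therefore accurate, and your proposal correctly stops short of claiming a proof. There is no gap to point out in the sense of a wrong step; the "gap" is simply that the general statement remains open, for you and for the authors alike.

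That said, your sketch of how to attack even the known special case diverges from what the paper actually does, and the divergence is instructive. You propose choosing a system of parameters $\bm y$ of $S$ whose first $\dim R$ entries come from a system of parameters $\bm x$ of $R$, and then transferring Loewy length through $R/\bm xR\hookrightarrow S/\bm xS$; as you yourself observe, this runs straight into the failure of any associativity formula for Loewy length, and the paper never attempts such a direct comparison of parameter ideals. Instead, the paper completes both rings, takes a Cohen factorization $R\to R'\to S$ with $R\to R'$ weakly regular and $R'\to S$ surjective, invokes Herzog's tangential flatness to conclude that $R'$ is strict Cohen-Macaulay with $\reg((R')^\g)=\reg(R^\g)$, deduces $\gll_{R'}(R')=\gll_R(R)$ from \cref{perfectModules}, and then applies \cref{mainTheorem} to $S$ viewed as a nonzero Cohen-Macaulay $R'$-module of finite projective dimension. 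In other words, everything is routed through the identity $\gll_R(R)=\reg(R^\g)+1$ for strict Cohen-Macaulay rings, which is exactly the quantity that behaves well under weakly regular base change; the module $M$ fed into the machinery of \cref{newlemma} is (a finite length quotient of) $S$ itself over $R'$, not the tensor product you suggest. Finally, the paper explicitly isolates the remaining obstruction you gesture at: after the question in \cref{s_loewy_conjecture} it notes that deducing \cref{conjecture_loewy_lech} from the Loewy Length Conjecture reduces to showing $\gll$ is unchanged along weakly regular maps between Cohen-Macaulay rings with infinite residue fields — a statement known to fail without the infinite residue field hypothesis by an example of Hashimoto--Shida. If you want to push further, that invariance question is the precise place to aim.
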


     By combining \cite[Theorem~2.1]{Ding:1994} and \cite[Corollary~5.2]{Hashimoto/Shida:1997}, the conjecture is already known to hold when $R\to S$ is a flat local extension of Gorenstein, strict Cohen-Macaulay rings having infinite residue fields and a regular fiber. We give a substantial improvement below. 

  \begin{theorem}
    Suppose $R\to S$ is a local extension of Cohen-Macaulay rings with infinite residue fields, and assume $\fdim_R(S)<\infty$. If $R$ is strict Cohen-Macaulay, then $\gll_R(R)\les \gll_S(S).$  In particular, \cref{conjecture_loewy_lech} holds in this setting. 
\end{theorem}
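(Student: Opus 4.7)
The plan is to show, for \emph{every} system of parameters $\y$ on $S$, the inequality $\ll_S(S/(\y))\ges \gll_R(R)$, and then take the infimum on the left to conclude $\gll_S(S)\ges \gll_R(R)$.

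Fix such a $\y$ and set $N\coloneqq S/(\y)$; this is nonzero. I would regard $N$ as an $R$-module and first verify $\fdim_R(N)<\infty$. Since $S$ is Cohen-Macaulay, $\y$ is an $S$-regular sequence, so the Koszul complex $\mathrm{Kos}^S(\y)$ is a bounded $S$-free resolution of $N$; in particular $\pdim_S(N)<\infty$. Combined with the hypothesis $\fdim_R(S)<\infty$, a standard transitivity argument (resolve $N$ over $S$ by free modules, then resolve each term over $R$ by flats) gives $\fdim_R(N)<\infty$.

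Now apply \cref{new_main_theorem} to the nonzero $R$-module $N$: since $R$ is strict Cohen-Macaulay with infinite residue field, we obtain
\[
\ll_R(N)\ges \gll_R(R).
\]
To pass from $\ll_R(N)$ to $\ll_S(N)$, use that $R\to S$ is local, so $\m S\subseteq \mathfrak{n}$ (where $\mathfrak{n}$ is the maximal ideal of $S$). Consequently $\m^i N\subseteq \mathfrak{n}^i N$ for every $i\ges 0$, whence $\mathfrak{n}^i N=0$ implies $\m^i N=0$. This gives $\ll_R(N)\les \ll_S(N)$, and stringing the inequalities together yields
\[
\ll_S(S/(\y))\ges \ll_R(S/(\y))\ges \gll_R(R).
\]
Taking the infimum over all systems of parameters $\y$ on $S$ produces $\gll_S(S)\ges \gll_R(R)$, which is the desired conclusion. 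In particular this confirms \cref{conjecture_loewy_lech} under the strict Cohen-Macaulay hypothesis on $R$.

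The steps are all fairly direct once \cref{new_main_theorem} is in hand; the only non-obvious ingredient is the transitivity of finite flat dimension needed to legitimately apply \cref{new_main_theorem} to $S/(\y)$ as an $R$-module, and that is why the hypotheses $\fdim_R(S)<\infty$ and $S$ Cohen-Macaulay (ensuring $\y$ is $S$-regular) are used in tandem. The passage from Loewy length over $R$ to Loewy length over $S$ is the expected direction and requires nothing beyond $\m S\subseteq \mathfrak{n}$.
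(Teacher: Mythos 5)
Your proof is correct, and it takes a genuinely different and shorter route than the paper's. The paper first completes, takes a Cohen factorization $R\xra{\iota}R'\to S$ with $\iota$ weakly regular and $R'\to S$ surjective, invokes Herzog's tangential flatness to see that $R'$ is again strict Cohen-Macaulay with $\reg((R')^\g)=\reg(R^\g)$, hence $\gll_{R'}(R')=\gll_R(R)$ by \cref{perfectModules}, and then applies \cref{mainTheorem} to $S$ viewed as a finitely generated Cohen-Macaulay $R'$-module of finite projective dimension. You instead bypass Cohen factorization entirely: for each system of parameters $\y$ on $S$ you apply \cref{new_main_theorem} directly to $S/(\y)$ as an $R$-module, which is legitimate precisely because that theorem requires only finite flat dimension (equivalently, vanishing of some $\Tor^R_i(-,k)$) and \emph{not} finite generation over $R$ --- the feature the paper itself highlights as the main strengthening over \cite{Avramov/Buchweitz/Iyengar/Miller:2010}. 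Your transitivity argument for $\fdim_R(S/(\y))<\infty$ and the comparison $\ll_R(N)\les\ll_S(N)$ via $\m S\subseteq\n$ are both sound. What the paper's longer route buys is the intermediate identity $\gll_{R'}(R')=\gll_R(R)$ along weakly regular maps (of independent interest for the discussion following \cref{conjecture_loewy_lech}); what your route buys is economy and a clean illustration of why the non-finitely-generated version of the main theorem matters.
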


\begin{proof}
  By \cite[Lemma~3.3]{Hashimoto/Shida:1997} generalized Loewy length is invariant upon completion, so we can assume both $R$ and $S$ are complete. Now by \cite[Theorem~1.1]{Avramov/Foxby/Herzog:1994}, there exists a Cohen factorization of $\varphi$: 
     \[
     R\xra{\iota }R'\xra{\vp'}S
     \]
     where $\iota$ weakly regular (that is to say, $\iota$ is flat with regular fiber) and $\vp'$ is surjective. 

     As $\iota$ is weakly regular, the map $\iota^\g\colon R^\g\to (R')^\g$ is tangentially flat, in the sense that it is a flat extension of standard graded algebras with a symmetric algebra fiber; see \cite[Theorem~1.2]{Herzog:1991}. In particular, $R'$ is strict Cohen-Macaulay. As a consequence, the second equality below holds:
     \begin{equation}\label{e_r_to_r'}
     \gll_R(R)=\reg(R^\g)+1=\reg((R')^\g)+1=\gll_{R'}(R')\,;
     \end{equation}
     the outside equalities are from \cref{perfectModules}.
     Again, using that $\iota$ is weakly regular and $\fdim_R(S)<\infty$ we have that $\pdim_{R'}(S)<\infty$; see \cite[Lemma~3.2]{Avramov/Foxby/Herzog:1994}. 
     Also, since $S$ is Cohen-Macaulay it follows that it is a Cohen-Macaulay $R'$-module. Therefore, we have 
     \[
     \gll_S(S)\ges \gll_{R'}(R')=\gll_R(R)\,,
     \]
     where inequality uses \cref{mainTheorem}, and the equality comes from \cref{e_r_to_r'}.
 \end{proof}

With \cref{l_bound}, we obtain an immediate corollary in the following, where we refer the reader to the nice introduction on reductions and reduction numbers in~\cite[Chapter~8]{Huneke/Swanson:2006}.  Furthermore, the next corollary verifies a conjecture of Hanes~\cite[Conjecture~3.1]{Hanes:2001} when $R$ is strict Cohen-Macaulay:  \emph{If $(R, \m) \to (S, \n)$ is a local extension of Cohen-Macaulay rings with infinite residue fields, then $r(\m)\les r(\n)$.} Hanes had previously established the conjecture in the standard graded setting. 

 \begin{corollary}
 \label{cor_reduction_regularity_nondec}
      Suppose $(R, \m) \to (S, \n)$ is a local extension of Cohen-Macaulay rings with infinite residue fields, and assume $\fdim_R(S)<\infty$. If $R$ is strict Cohen-Macaulay, then $r(\m) \les r(\n)$ and $\reg(R^\g) \les \reg(S^\g)$.
 \end{corollary}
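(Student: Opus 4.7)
The plan is to exploit the theorem just established, namely $\gll_R(R)\les\gll_S(S)$, and translate this generalized Loewy length comparison into the two stated inequalities via the dictionary between generalized Loewy length, regularity, and reduction number developed in \cref{s_gll_sr}.

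For the regularity inequality, the first step is to invoke \cref{perfectModules}, which converts the hypothesis that $R$ is strict Cohen-Macaulay with infinite residue field into the equality $\gll_R(R)=\reg(R^\g)+1$. On the $S$-side, applying \cref{l_bound} to any superficial system of parameters $\y\in\n\smallsetminus\n^2$ (which exists by \cref{c_ll_reduction}, since $S$ has infinite residue field) yields $\gll_S(S)\les\ll_S(S/(\y))\les\reg(S^\g)+1$. Chaining with the preceding theorem gives
\[
\reg(R^\g)+1=\gll_R(R)\les\gll_S(S)\les\reg(S^\g)+1,
\]
which establishes $\reg(R^\g)\les\reg(S^\g)$.

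For the reduction number inequality $r(\m)\les r(\n)$, the plan is to sandwich both reduction numbers between the corresponding generalized Loewy lengths. On the $S$-side the bound is formal: picking a minimal reduction $J$ of $\n$ realizing $r(\n)$, one has $\n^{r(\n)+1}=J\n^{r(\n)}\subseteq J$, so $\ll_S(S/J)\les r(\n)+1$, and since $J$ is a system of parameters on $S$, this gives $\gll_S(S)-1\les r(\n)$. On the $R$-side the Valabrega-Valla intersection formula applies because strict Cohen-Macaulayness forces $\x^*$ to be a regular sequence of linear forms in $R^\g$ for any superficial system of parameters $\x\in\m\smallsetminus\m^2$: the identity $(\x)\cap\m^{n+1}=(\x)\m^n$ together with $\m^{\ll_R(R/(\x))}\subseteq(\x)$ forces $r_{(\x)}(\m)\les\ll_R(R/(\x))-1=\reg(R^\g)$ by \cref{example_strict_cm}, and so $r(\m)\les\gll_R(R)-1$ by \cref{perfectModules}. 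Assembling yields
\[
r(\m)\les\gll_R(R)-1\les\gll_S(S)-1\les r(\n).
\]

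The main obstacle is the asymmetric treatment of the two reduction numbers: the upper bound $r(\m)\les\gll_R(R)-1$ genuinely uses strict Cohen-Macaulayness (via Valabrega-Valla), since in general $r_J(\m)$ can strictly exceed $\ll_R(R/J)-1$, whereas the lower bound $\gll_S(S)-1\les r(\n)$ is a formal consequence of the definitions.
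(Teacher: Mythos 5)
Your proof is correct and matches the paper's intended argument: the corollary is stated as an immediate consequence of the preceding theorem together with \cref{l_bound}, and your two chains $\reg(R^\g)+1=\gll_R(R)\les\gll_S(S)\les\reg(S^\g)+1$ and $r(\m)\les\gll_R(R)-1\les\gll_S(S)-1\les r(\n)$ are exactly the intended deductions. The only step the paper leaves implicit is $r(\m)\les\reg(R^\g)$ on the source side, and your Valabrega--Valla argument (equivalently, the isomorphism $(R/(\x))^\g\cong R^\g/(\x^*)$ already invoked in \cref{example_strict_cm}) supplies it correctly.
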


 \begin{remark}
 \label{c_IMW}
      The \emph{Length Conjecture} of Iyengar--Ma--Walker~\cite[Conjecture 1]{Iyengar/Ma/Walker:2022} implies  Lech's conjecture in full generality for Cohen-Macaulay rings; see also \cite[Chapter V]{Ma:2014}. The former posits that over a local ring $R$, any nonzero module of finite projective dimension has $\ell_R(M)\ges e(R)$. This is known to hold when $R$ is a strict complete intersection ring (that is, its associated graded is a complete intersection) or when $R$ is a localization of a standard graded algebra, at its homogeneous maximal ideal, over a perfect field of positive characteristic \cite{Iyengar/Ma/Walker:2022}. The Loewy Length Conjecture (\cref{introconj}) is the analog of the Length Conjecture; cf.\@ \cref{r_loewy}. Hence, it seems appropriate to ask the following.
 \end{remark}

 \begin{question}
     If the Loewy Length Conjecture holds for all Cohen-Macaulay rings having infinite residue field, does this imply \cref{conjecture_loewy_lech}  holds? 
 \end{question}

The only thing to determine to answer the question in the positive is whether the generalized Loewy length remains the same along a weakly regular map between Cohen-Macaulay rings having infinite residue field; without infinite residue fields this is false because of the example of Hashimoto--Shida~\cite{Hashimoto/Shida:1997}. 

\section{Bounds over general local rings}\label{s_ci}

The main result of the section is a stronger version of \cref{intro_ci} from the introduction; it establishes a lower bound for the Loewy length of a nonzero module of finite flat dimension over a complete intersection ring. This result can be found later in the section (see \cref{t_mark}). First, we improve known lower bounds for Loewy lengths over general local rings in \cref{t_ci}.

 \begin{chunk}
 \label{c_order}
  For a nonzero element $f$ in a local ring $(R,\m)$ recall that its $\m$-adic order, denoted $\ord(f)$, is the smallest positive integer $n$ such that $f\in \m^n\smallsetminus\m^{n+1}$; by Krull's Intersection Theorem this is well-defined, and we set $\ord(0)=\infty$. The \emph{order of R}, denoted $\ord(R)$, is the minimal $\m_Q$-adic order of the kernel of a minimal Cohen presentation $(Q,\m_Q)\to \widehat{R}$.
The \emph{max-order of} $R$, denoted $\max\ord(R)$, is the maximal $\m_Q$-adic order of a minimal generator of the kernel of a minimal Cohen presentation $(Q,\m_Q)\to \widehat{R}$. We always have inequalities 
    \[
    \ord(R)\les \max\ord(R)\les \reg(R^\g)+1\,,
    \]
     where the first is by definition. The second inequality holds since $\max\ord(f)$ is the degree of a minimal homogeneous generator for the kernel of a surjective map from a standard graded polynomial ring to $R^\g.$
\end{chunk}
 
 \begin{chunk}\label{c_bound_general}
     For a Gorenstein local ring $R$, if $M$ is a nonzero finite length module having finite projective dimension, then 
 \[
 \ll_R(M)\ges \ord(R)\,.
\]
 This was first established in the case that $R$ is also assumed to be strict Cohen-Macaulay with infinite residue field in \cite[Theorem~1.1]{Avramov/Buchweitz/Iyengar/Miller:2010}, and for general Gorenstein rings in \cite[Theorem~1.1]{Puthenpurakal:2017}. 
 \end{chunk}

The following result removes the Gorenstein hypothesis and improves the bound in \cref{c_bound_general} in a rather drastic way. The proof is similar to that of \cite[Proposition~1.2]{Koh/Lee:1998} which identifies homology classes in Tor-modules depending on the Loewy length of a module. 

 \begin{theorem}
 \label{t_ci}
      Assume $R$  is a local  ring, and fix a minimal Cohen presentation $\widehat{R}\cong Q/(f_1,\ldots,f_t)$ and a nonzero $R$-module $M$. 
     \begin{enumerate}
         \item If  $M$ is finitely generated and $\ll_R(M)<\ord(R),$ then $\cx_R(M)\ges t$.
         \item    If $M$ has finite flat dimension and $\m M\neq M$, then $\ll_R(M)\ges \max\ord(R).$
     \end{enumerate}

 \end{theorem}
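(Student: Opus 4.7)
My plan is to complete $R$ and fix a minimal Cohen presentation $\widehat R = Q/I$ with $I = (f_1,\ldots,f_t)$, then exploit the $\m_Q$-adic orders of the $f_j$'s via the Koh--Lee paradigm: a low Loewy length of $M$ forces certain homology classes in $\Tor^R_*(M,-)$ to persist, driven by the defining relations. I would treat the two parts with parallel arguments, using \cref{newlemma} as the bridge in part (2) and an action-on-$\Tor$ argument in part (1).

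For part (2), the strategy is to apply \cref{newlemma} to a carefully chosen finite-length $R$-module $N$ with a socle element deep in $\m^{d-1}N$, where $d = \max\ord(R)$. Reorder so $\ord(f_t) = d$ and take $N := Q/(f_1,\ldots,f_{t-1},\m_Q^d)$: since $f_t \in \m_Q^d$, this quotient is annihilated by $I$, hence inherits an $R$-module structure; it has Loewy length $d$ (after a short nondegeneracy check that $\m_Q^{d-1} \not\subseteq (f_1,\ldots,f_{t-1}) + \m_Q^d$), and any nonzero $\bar s \in \m^{d-1}N$ supplies a socle element at the desired depth. Following the template of \cref{p_construction}, I would inductively construct a lift $\sigma : F \to G$ of the map $1 \mapsto \bar s$ (with $F \xra{\simeq} k$ and $G \xra{\simeq} N$ the minimal $R$-free resolutions) satisfying $\sigma(F_i) \subseteq \m^{d-1}G_i$ for all $i \ge 0$. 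Then \cref{newlemma} with $n = d-1$ yields $\ll_R(M) \ge d$ for every $M$ with $\Tor^R_j(M,k)=0$ for some $j$, which is automatic whenever $\fdim_R(M) < \infty$.

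For part (1), set $n := \ll_R(M) < \ord(R)$, so every relation $f_j$ lies in $\m_Q^{n+1}$. Lifting the minimal $R$-free resolution $G \to M$ to $\widetilde G$ over $Q$, we obtain $\widetilde\partial^2 \in I\widetilde G[-2] \subseteq \m_Q^{n+1}\widetilde G[-2]$. The $t$ generators of $I$ correspond to $t$ linearly independent classes in $\Tor^Q_1(R,k) \cong k^t$, realized as the ``relation columns'' of $F_2 \to F_1$ in the minimal $R$-resolution of $k$; these lift to $t$ central degree-$2$ operators on $\Ext^*_R(k,k)$ (the Gulliksen--Eisenbud operators) which act on $\Ext^*_R(M,k)$. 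The Koh--Lee identification of homology classes then shows these $t$ actions are algebraically independent on $\Ext^*_R(M,k)$, because the Loewy-length constraint rules out cancellation by $M$'s module structure; this produces polynomial growth of Betti numbers of rate at least $t-1$, i.e., $\cx_R(M) \ge t$.

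The main obstacle in part (2) is the inductive construction of $\sigma$, which substitutes for \cref{p_linearitydefect} in the absence of a strict Cohen--Macaulay hypothesis. One must argue directly that the relevant homology of the linear part of $G$ vanishes in the needed range, using only that $f_t$ has order exactly $d$ while the lower-order $f_j$'s are absorbed into the definition of $N$. In part (1) the parallel obstacle is verifying the algebraic independence of the $t$ central Gulliksen--Eisenbud operators when restricted to $\Ext^*_R(M,k)$; the strict inequality $\ll_R(M) < \ord(R)$ is used precisely here, since it ensures that no $f_j$ can act by $0$ on the relevant portion of $\Ext^*_R(M,k)$.
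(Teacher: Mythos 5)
Your proposal has genuine gaps in both parts; in each case the step you flag as ``the main obstacle'' is precisely the content of the theorem, and the route you propose for it does not obviously go through. For part (2), reducing to \cref{newlemma} via $N=Q/((f_1,\ldots,f_{t-1})+\m_Q^d)$ requires producing a lift $\sigma\colon F\to G$ with $\sigma(F_i)\subseteq\m^{d-1}G_i$ for all $i$, and you give no argument for this. The analogue of \cref{p_linearitydefect} is exactly what fails for general local rings (this is why \cref{new_main_theorem} needs the strict Cohen--Macaulay hypothesis), and nothing about your choice of $N$ makes the relevant linear-part homology vanish; the lower-order $f_j$'s being ``absorbed'' into $N$ does not control the minimal $R$-free resolution of $N$. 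The paper avoids any lifting problem: it works with the Tate resolution $R\langle X\rangle\xra{\simeq}k$, where the degree-two variables $y_1,\ldots,y_t$ correspond to the $f_i$'s and satisfy $\del(y_i^{(c)})=\sum_j a_{ij}e_jy_i^{(c-1)}$ with $f_i=\sum\tilde a_{ij}x_j$ and $a_{ij}\in\m^{\ord(f_i)-1}$. If $\ll_R(M)<\max\ord(R)=d$, then for an $i$ with $\ord(f_i)=d$ the coefficients $a_{ij}$ annihilate $M$, so each $y_i^{(c)}\otimes m$ (with $m\in M\smallsetminus\m M$) is a cycle in $R\langle X\rangle\otimes_RM$ that is not a boundary by minimality; this gives nonzero classes in $\Tor^R_{2c}(k,M)$ for every $c$, hence infinite flat dimension.

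Part (1) has a more serious conceptual problem: for a presentation $\widehat R\cong Q/(f_1,\ldots,f_t)$ in which the $f_i$ need \emph{not} form a regular sequence, the degree-two ``Gulliksen--Eisenbud operators'' you invoke are not well defined as central, commuting operators on $\Ext_R^*(M,k)$ (the decomposition $\widetilde\partial^2=\sum f_j\widetilde t_j$ is not unique up to homotopy outside the complete intersection case), and even granting them, ``algebraic independence'' of $t$ operators on $\Ext_R^*(M,k)$ is asserted rather than proved and would not by itself yield $\cx_R(M)\ges t$ without knowing $\Ext_R^*(M,k)$ is suitably free over the polynomial ring they generate. The paper's argument is elementary and bypasses all of this: when $\ll_R(M)<\ord(R)$ every $a_{ij}$ annihilates $M$, so \emph{all} divided power monomials $y_1^{(c_1)}\cdots y_t^{(c_t)}\otimes m$ are cycles and non-boundaries, whence $\beta_i^R(M)\ges\rank_k(\Gamma_i\otimes_kM/\m M)$ with $\Gamma=k\langle y_1,\ldots,y_t\rangle$ of Hilbert series $(1-z^2)^{-t}$; counting these monomials gives growth of order $i^{t-1}$ and hence $\cx_R(M)\ges t$. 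Your nondegeneracy check in part (2) (that $\m_Q^{d-1}\not\subseteq(f_1,\ldots,f_{t-1})+\m_Q^d$, which follows from minimality of the presentation via Krull intersection) is fine, but it is the only step of the proposal that is actually carried out.
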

Above, recall  the complexity of a finitely generated $R$-module $M$ is
\[
\cx_R(M)=\inf\{d\in \mathbb{N}\mid \beta_n^R(M)\les a n^{d-1}\text{ for some $a>0$ and all $n$}\}\,;
\]
this is a measure of the polynomial growth of the Betti numbers of $M$ over $R$. In particular $\cx_R(M)=0$ if and only if $\pdim_R(M)<\infty$.  See \cite[Chapter~5]{Avramov:2010} for more on this and other asymptotic homological invariants defined over local rings; see also \cite{Avramov/Iyengar/Miller:2006}.

\begin{proof}[Proof of Theorem \ref{t_ci}]
    We make use of a well-known dg algebra structure on a minimal free resolution of the residue field $k$ over $R$; see \cite[Chapter~6]{Avramov:2010}, or \cite{Gulliksen/Levin:1969}, for more details and any unexplained notation or terminology. 
    
    The essential point is that a minimal free resolution of $k$ over $R$ has the form $R\langle X\rangle$ where as a graded $R$-algebra it is the free divided power algebra with $X=X_1,X_2,X_3,\ldots$, and  $X_i$ consists of variables of degree $i$. The differential is determined by its values on the variables and extended via the Leibniz rule and respecting divided powers. We are particularly interested in the differential on the divided power subalgebra on $X_2$, which can be described explicitly as follows. 
    
There is no harm in assuming $R$ is complete and so using a minimal Cohen presentation $R=Q/(f_1,\ldots,f_t)$, fix a minimal set of generators $x_1,\ldots, x_d$ of $\m_Q$, and write 
    \[
    f_i=\sum \tilde{a}_{ij} x_j\quad \text{with }\tilde{a}_{ij}\in \m_Q\,.
    \]
    Then $X_1=\{e_1,\ldots,e_d\}$, $X_2=\{y_1,\ldots,y_t\}$ and we have 
    \[
    \del(y_i)=\sum a_{ij} e_j\quad\text{ for each }i=1,\ldots, t
    \]
    where $a_{ij}$ is the image of $\tilde{a}_{ij}$ in $\m$, the maximal ideal of $R$; this calculation is classical and due to Tate~\cite[Theorem~4]{Tate:1957}, but it is also explained in the references above. As a consequence, the differential on the divided power monomial $y_1^{(c_1)} \cdots  y_t^{(c_t)}$ is 
    \begin{equation}\label{e_differential}
     \del(y_1^{(c_1)}\cdot \cdots \cdot y_t^{(c_t)})=\sum_{i=1}^t \sum_{j=1}^da_{ij}e_jy_1^{(c_1)}\cdots y_i^{(c_i-1)}\cdots y_t^{(c_t)}\,.   
    \end{equation}

    Now assume $\ll_R(M)<\ord(R)=n$, then in the Cohen presentation above each $f_i\in \m_Q^n$ and so we can assume $a_{ij}\in \m^{n-1}$. In particular, from \cref{e_differential} it follows that  \[\del(y_1^{(c_1)}\cdot \cdots \cdot y_t^{(c_t)}\otimes m)=\sum_{i=1}^t \sum_{j=1}^da_{ij}e_jy_1^{(c_1)}\cdots y_i^{(c_i-1)}\cdots y_t^{(c_t)}\otimes m=0
    \]
    as each $a_{ij}\in \m^{n-1}\subseteq \ann_R(M).$ Hence, for any $m\in M\smallsetminus \m M$, we have a cycle 
    \[
    y_1^{(c_1)}\cdots  y_t^{(c_t)}\otimes m
    \]
    that cannot be a boundary as $\del(R\langle X\rangle \otimes_R M)\subseteq \m R\langle X\rangle \otimes_R M$. Therefore, we have obtained the inequality below: 
    \[
    \beta^R_i(M)=\rank_k\Tor^R_i(k,M)=\rank_k \H_i(R\langle X\rangle \otimes_R M) \ges \rank_k (\Gamma_i\otimes_k M/\m M)
    \]
    where $\Gamma=k\langle X_2\rangle$, the free divided power algebra on the degree two variables $X_2$. The Hilbert series for $\Gamma$ (over $k$) is $(1-z^2)^{-t}$ and so the desired result on complexity has been established.  

     Next, assume $\ll_R(M)<\max\ord(R)=n$, then in the Cohen presentation above at least one $f_i$ belongs to  $\m^n_Q$ and so for that $i$, we can assume  $a_{ij}\in \m^{n-1}$ for each $j$. In this case, from \cref{e_differential} it follows that  \[\del(y_i^{(c)}\otimes m)= \sum_{j=1}^ta_{ij}e_jy_i^{(c)}\otimes m=0
    \]
    as each $a_{ij}\in \m^{n-1}\subseteq \ann_R(M).$ Again, the minimality of $R\langle X\rangle\otimes_R M$ gives us cycles that are not boundaries in each even degree: 
    \[
    \{y_i^{(c)}\otimes m\mid c\ges 0\text{ and }m\in M\smallsetminus \m M\}\,.
    \]
   Finally, recalling the homology of $R\langle X\rangle\otimes_R M$ is $\Tor^R(k,M)$ we have shown that $M$ has infinite flat dimension. 
\end{proof}

\begin{remark}
\label{r_extremal}
     \cref{t_ci} shows that when $R$ is a local complete intersection ring and $M$ is a nonzero finitely generated $R$-module with $\ll_R(M)<\ord(R)$, then $M$ is extremal in the sense of Avramov~\cite{Avramov:1996}. Roughly speaking, this means the Betti sequence of $M$ grows at the same rate as the Betti sequence of the residue field (the latter has maximal growth, in a precise sense, among all finitely generated $R$-modules). 
    In light of this and \cref{t_ci}, we ask the following. 
\end{remark}
\begin{question}
\label{q_extremal}
    For a local ring  $R$, if a nonzero finitely generated $R$-module $M$ satisfies $\ell\ell_R(M) < \ord(R)$, must $M$ be extremal? 
\end{question}

The remainder of the article centers around establishing a lower bound for Loewy lengths over complete intersection rings. We give two proofs: the first proof uses similar ideas to the ones from the proof of \cref{mainTheorem}, while the second proof, suggested to us by Mark Walker, uses a construction also used to establish the Length Conjecture for strict complete intersection rings in \cite[Corollary~V.29]{Ma:2014}.

\begin{theorem}
\label{t_mark}
    Assume $R$ is a complete intersection ring of codimension $c$. If $M$ is a nonzero $R$-module of finite flat dimension, then 
    \[
    \ll_R(M)\ges \sum_{i=1}^c\ord(f_i)-c+1
    \]
    where $\widehat{R}\cong Q/(f_1,\ldots,f_c)$ is any minimal Cohen presentation of $R$. In particular, if $M$ is a nonzero Cohen-Macaulay module of finite projective dimension, then \[\gll_R(M)\ges \sum_{i=1}^c\ord(f_i)-c+1\,.\]
\end{theorem}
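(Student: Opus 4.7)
The plan is to apply \cref{newlemma} in analogy with the proof of \cref{new_main_theorem}. Pass to the completion so that $R\cong Q/(f_1,\ldots,f_c)$ for a regular local ring $(Q,\m_Q)$ of dimension $d+c$, with $d=\dim R$. After eliminating any $f_j$ of order one (which preserves both the complete intersection property and the desired inequality), assume $f_j\in \m_Q^2$ for all $j$; set $d_j=\ord(f_j)$ and $n=\sum_j d_j-c=\sum_j(d_j-1)$. The aim is to construct a finite length $R$-module $N$, its minimal free resolution $G$, and a chain map $\sigma\colon F\to G$ lifting a nonzero map $k\to N$ such that $\sigma(F_i)\subseteq \m^n G_i$ for every $i\ges 0$, where $F$ is the minimal $R$-free resolution of $k$. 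Since $M$ has finite flat dimension, $\Tor^R_{d+1}(M,k)=0$ for some $d\ges 0$, so \cref{newlemma} will then give $\ll_R(M)\ges n+1$, as desired. The ``in particular'' statement about $\gll_R(M)$ follows by applying this inequality to $M/(\x)M$ for each system of parameters $\x$ on $M$, exactly as in the proof of \cref{mainTheorem}.

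For the first approach (in the spirit of \cref{mainConstruction}), I would choose $\tilde z_1,\ldots,\tilde z_d\in Q$ whose images $z_1,\ldots,z_d\in R$ form a system of parameters and such that $(\tilde z,f)$ is a regular system of parameters on $Q$; take $N=R/(z)$ and $G=\mathrm{Kos}^R(z)$. For $F$, I would invoke the Tate--Gulliksen resolution available over a complete intersection, namely $F=R\langle e_1,\ldots,e_{d+c}; y_1,\ldots,y_c\rangle$ with $e_i$ exterior of degree one and $y_j$ divided power of degree two, satisfying $\del(e_i)=\bar x_i$ (for a minimal generating set $x_1,\ldots,x_{d+c}$ of $\m_Q$) and $\del(y_j)=\sum_i a_{ij}e_i$ where $f_j=\sum_i a_{ij}x_i$. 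Expressing $f_j\in \m_Q^{d_j}$ using coefficients of highest possible $\m_Q$-adic depth makes $a_{ij}\in \m_Q^{d_j-1}$, and hence $\del(y_j)\in \m^{d_j-1}F_1$. The chain map $\sigma$ is then built inductively so that passing through each $y_j$ in a divided power monomial accumulates $\m$-adic depth $d_j-1$; the total depth picked up on $y^{(\alpha)}$ is $\sum_j\alpha_j(d_j-1)$, and the construction is calibrated so that at every homological degree the image of $\sigma$ lies in $\m^n G$.

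Walker's alternative, rather than constructing $\sigma$ by hand, would invoke a module built in \cite{Herzog/Ulrich/Backelin:1991}---the same one that Ma uses in \cite[Corollary~V.29]{Ma:2014} to settle the Length Conjecture over strict complete intersections. That construction produces an $R$-module together with a minimal free resolution admitting a lift of the requisite depth, so it directly supplies the hypotheses of \cref{newlemma} without the explicit Tate--Gulliksen bookkeeping.

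The main obstacle in the first approach is the inductive verification that $\sigma$ lands in $\m^n G$ at every homological degree: this requires delicate tracking of $\m$-adic orders contributed by the divided power variables $y_j$ and the coefficients $a_{ij}$. The complete intersection hypothesis is essential here, as it is exactly what guarantees the explicit Tate--Gulliksen form of $F$; without it, one would need infinitely many higher variables $X_3,X_4,\ldots$ whose differentials are not controlled by the $f_j$ alone. The value $n=\sum_j(d_j-1)$ in the theorem is forced by the construction: it is precisely the $\m$-adic depth accumulated by the top divided power monomial $y_1^{(1)}\cdots y_c^{(1)}$.
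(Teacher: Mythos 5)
You have identified the right target---reduce to \cref{newlemma} by producing a lift $\sigma\colon F\to G=\mathrm{Kos}^R(\x)$ of a nonzero map $k\to R/(\x)$ with $\sigma(F_i)\subseteq \m^nG_i$---but constructing that lift is the entire content of the theorem, and the method you propose for it does not work. The inductive lifting scheme of \cref{mainConstruction} requires, at each step, that an element of $\del(G_{i+1})\cap\m^{n+1}G_i$ be the boundary of an element of $\m^{n}G_{i+1}$; that is exactly \cref{p_linearitydefect}, which is only available for $n\ges\reg(R^\g)$. For a general complete intersection the value $n=\sum_i(\ord(f_i)-1)$ can be strictly smaller than $\reg(R^\g)$ (in \cref{r_alessandro} one has $n=3$ while $\reg(R^\g)=6$), so the inductive step has a genuine obstruction. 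Your heuristic that the depth is ``accumulated'' by the monomial $y_1^{(1)}\cdots y_c^{(1)}$ also cannot be right as stated: that monomial sits in homological degree $2c$ of $F$, whereas the containment $\sigma(F_0)\subseteq\m^nG_0$ is needed already in degree zero---i.e., you must first exhibit a nonzero socle element of $R/(\x)$ of order at least $n$, and your proposal never produces one.

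The paper's first proof avoids lifting over $R$ altogether. Working over $Q$, it writes down an explicit map of dg modules over $E=\mathrm{Kos}^Q(\bm{f})$ from $\tilde{A}=E\otimes_Q\mathrm{Kos}^Q(\x)$ to the full Koszul complex $K$ of $Q$, determined by expressing each $f_i$ in a regular system of parameters with coefficients in $\m_Q^{\ord(f_i)-1}$; the component through the top exterior power $E_c$ then visibly lands in $\m_Q^{n}$. Dualizing---this is where the required socle element appears, namely the transition determinant $\det(a_{ij})\in\m_Q^{n}\smallsetminus(\bm{f},\x)$---and applying the Eisenbud--Shamash functor $\Gamma\otimes_Q^\tau(-)$ yields $\sigma$ on the minimal resolution $F=\Gamma\otimes_Q^\tau K$ with the containment $\sigma(F)\subseteq\m^{n}A$ built in degreewise; no induction is needed. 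Separately, your description of Walker's argument is off: the paper's second proof does not feed anything into \cref{newlemma}. It uses the Herzog--Ulrich--Backelin filtration $0=U_0\subseteq\cdots\subseteq U_n=R^t$ with $U_{i-1}\subseteq\m U_i$ and MCM subquotients, tensors with $M$, and uses vanishing of higher Tor of $M$ against MCM modules (\cref{tor_ind}) to conclude $0\neq U_1\otimes M\subseteq\m^{n-1}M^t$ directly.
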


The bound in \cref{t_mark} can be lower than $\gll_R(R)$ when $R$ is a local complete intersection ring that is not strictly Cohen-Macaulay. 

\begin{example}
\label{r_alessandro}
In \cite{Alessandro:2016}, DeStefani showed that the following one dimensional complete intersection $k$-algebra, with $k$ a field, 
    \[ R = \frac {k\llbracket  x, y, z \rrbracket }{(x^2-y^5, xy^2+yz^3-z^5)}\] 
    has $\gll_R(R)=6$ and a calculation, using \texttt{Macaulay2}, shows $\reg(R^\g)=6$. In particular, $R$ is not strict Cohen-Macaulay; one can also see this from a direct calculation (and analysis) of $R^\g$. In fact, by \cref{mainTheorem}, it follows that $R$ does not have minimal regularity (in the sense defined in \cref{remark_min_reg}). Here, the lower bound from \cref{t_mark} is 4.
\end{example}

\begin{example}\label{e_nonstrictci}
    Fix $n\ges 2,$ and let $R=k\llbracket a,b,x,y,z\rrbracket/(a^2-x^n,ab-y^n,b^2-z^n)$ where $k$ is a field. Using that $(x,z)$ is a minimal reduction, one can argue $R^\g$ is a free extension over $k[x^*,z^*]$; as a consequence, $R$ is a complete intersection ring that is strict Cohen-Macaulay. In particular, the bound from \cref{new_main_theorem} implies 
    \[
    \ll_R(M)\ges \ll_R(R/(x,z))=\ll(k\llbracket a, b, y\rrbracket/(a^2,ab-y^n,b^2))=2n\,
    \]
    for any nonzero $R$-module having finite flat dimension. However, the bound from \cref{t_mark} only asserts that $\ll_R(M)$ is at least 4, which is arbitrarily smaller than $\gll_R(R)$ as $n$ grows. 
\end{example}

\begin{remark}
    In \cref{t_mark}, each $\ord(f_i)$ is at least two  and so \cref{t_mark} implies $\ll_R(M)\ges c+1$. Hence the bound from \cref{t_mark} strengthens the already known bounds for \emph{modules} over complete intersection rings in \cite{Avramov/Buchweitz/Iyengar/Miller:2010}; the bounds from the latter are known to hold for the sum of the Loewy lengths of the homology modules of perfect complexes, and is tight as the Koszul complex $K$ on the maximal ideal always has $\sum\ll_R\H_i(K)=c+1.$ 
\end{remark}

\begin{proof}[First proof of \cref{t_mark}]
    First, we need a bit of notation; see \cite{Avramov:2010} for background on dg modules in commutative algebra. 
    
    We can assume $R$ is complete and hence  $R\cong Q/(f_1,\ldots,f_c)$ with $f_1,\ldots,f_c$ a regular sequence in the regular ring $(Q,\m_Q)$. Let $\m_Q=(t_1,\ldots,t_c,x_1,\ldots,x_d)$ where the image of $x_1,\ldots,x_d$ is a maximal regular sequence in $\m_R\smallsetminus\m_R^2$, and write 
    \[
    f_i=\sum_{j=i}^ca_{ij}t_j+\sum_{j=1}^db_{ij}x_j
    \]
    with each $a_{ij}$ and $b_{ij}$ belonging to $\m_Q^{\ord(f_i)-1}.$ Set 
    \begin{align*}
    E&=\mathrm{Kos}^Q(\bm{f})=Q\langle e_1,\ldots, e_c\mid \del e_i=f_i\rangle\\
    \tilde{A}&=E\otimes_Q \mathrm{Kos}^Q(\x)=Q\langle e_1,\ldots,e_c,e'_1,\ldots, e'_d\mid \del e_i=f_i\,, \del e_i'=x_i\rangle\\
    K&=\mathrm{Kos}^Q(\bm{t},\x)=Q\langle e_1'',\ldots,e_c'',e'_1,\ldots, e'_d\mid \del e_i''=t_i\,, \del e_i'=x_i\rangle\,,
    \end{align*}
    which are dg $E$-modules; the $E$-actions on $E,\tilde{A}$ are the obvious ones and the $E$ action on $K$ is given by the map on dg $Q$-algebras $ E\to K$ determined by  
    \[
    e_i\mapsto \sum_{j=i}^ca_{ij}e_j''+\sum_{j=1}^db_{ij}e_j'\,.
    \]
    Finally, consider the map $\alpha\colon \tilde{A}\to K$ of dg $E$-modules extending the map above and sending each $e_j'$ in $\tilde{A}$ to $e_j'$ in $K$. 

Note that $\alpha_i=\bigwedge^i \alpha_1\colon \tilde{A}_i\to K_i$ and now using that each $a_{ij}$, $b_{ij}$ is in $\m_Q^{\ord(f_i)-1}$, a direct calculation shows
\begin{equation}
    \label{e_koszul_containment}
    \alpha(E_c\otimes_Q \mathrm{Kos}^Q(\bm{x}))\subseteq \m_Q^n K\quad \text{where}\quad n= \sum_{i=1}^c \ord(f_i) -c\,.
\end{equation}
 Also, it is well known (see, for instance, \cite[Exercise~21.23]{Eisenbud:1995}) that the cone of 
\[
\shift^{c+d}\alpha^\vee\colon \shift^{c+d}K^\vee \to \shift^{c+d}\tilde{A}^\vee\,,
\] 
is a $Q$-free resolution of $R/(\x,\det(a_{ij}))$ where $(-)^\vee=\Hom_Q(-,Q)$. Using the self-duality of Koszul complexes, $\shift^{c+d}\alpha^\vee$ can be regarded as a dg $E$-module map from $K$ to $\tilde{A}$; write $\tilde{\sigma}\colon K\to \tilde{A} $ for this map using these identifications of each Koszul complex with its dual. By \cref{e_koszul_containment}, composing $\tilde{\sigma}$  with the projection of $\tilde{A}\to E_0\otimes_Q \mathrm{Kos}^Q(\x)$ factors as the map of graded $Q$-modules:
\begin{equation}\label{e_koszul_cone}
    \begin{tikzcd}
       K\ar[r,"\tilde{\sigma}"] \ar[d,dashed] &\tilde{A}\ar[d,two heads] \\
     \m_Q^nE_0\otimes_Q \mathrm{Kos}^Q(\x) \ar[r,hook] &  E_0\otimes_Q \mathrm{Kos}^Q(\x)\,;
    \end{tikzcd}
\end{equation}
here we used that the identification of $\shift^{c+d}\tilde{A}$ with $\tilde{A}$, restricts to an isomorphism
\[\shift^{c+d}(E_c\otimes_Q\mathrm{Kos}^Q(\x))^\vee\cong \shift^cE_c^\vee\otimes_Q \shift^d( \mathrm{Kos}^Q(\x))^\vee\cong E_0\otimes_Q \mathrm{Kos}^Q(\x)\,.\] 

Now letting $\Gamma\coloneqq R\langle y_1,\ldots,y_c\rangle$ where each $y_i$ is a degree two divided power variable, $1\otimes\tilde{\sigma}\colon \Gamma\otimes_Q^\tau K\to \Gamma\otimes_Q^\tau \tilde{A}$ is a map of $R$-complexes;
here, for a dg $E$-module $N$, the $R$-complex $\Gamma\otimes_Q^\tau N$ is the construction of Eisenbud~\cite{Eisenbud:1980} and Shamash~\cite{Shamash:1969} (see also \cite[Section~2]{Avramov/Buchweitz:2000a}). Explicitly, $\Gamma\otimes_Q^\tau N$  is the free graded $R$-module $\Gamma\otimes_Q N$ with differential 
\[
y_1^{(h_1)}\cdots y_c^{(h_c)}\otimes n\mapsto y_1^{(h_1)}\cdots y_c^{(h_c)}\otimes \del^N(n)+\sum_{i=1}^cy_1^{(h_1)}\cdots y_i^{(h_i-1)}\cdots y_c^{(h_c)}\otimes e_in\,.
\]
Set $F=\Gamma\otimes_Q^\tau K$, which by \cite[Theorem~2.4]{Avramov/Buchweitz:2000a} (see also \cite{Tate:1957}), is a minimal free resolution of $k$ over $R$. Also, there is an isomorphism of complexes 
\[
\Gamma\otimes_Q^\tau \tilde{A} \cong R\langle e_1,\ldots,e_c,e_1',\ldots, e_d',y_1,\ldots,y_c\mid \del e_i=0,\, \del e'_i=x_i,\, \del y_i=e_i\rangle 
\]
and so we have quasi-isomorphisms
\[
\Gamma\otimes_Q^\tau \tilde{A}\xra{\simeq} R\otimes_E \tilde{A} \cong \mathrm{Kos}^R(\x)\xra{\simeq} R/(\x)\,.
\]

Set $A=R\otimes_E \tilde{A}$ and let $\sigma$ be the composition of chain maps
\[
F=\Gamma\otimes_Q^\tau K\xra{1\otimes\tilde{\sigma}}  \Gamma\otimes_Q^\tau \tilde{A}\xra{\simeq}  A\,.
\] 
Forgetting differentials,  $\sigma$ factors as the following map of  graded $Q$-graded modules:
\begin{equation*}
    \begin{tikzcd}
       F\ar[rr,"1\otimes  \tilde{\sigma}"] \ar[d,two heads] \ar[rrd,"\sigma"]& & \Gamma\otimes_Q^\tau \tilde{A}\ar[d,two heads] \\
    R\otimes_Q K \ar[r,"1\otimes \tilde{\sigma}"] &  R\otimes_Q \tilde{A}\ar[r,two heads] & R\otimes_E\tilde{A}=A
    \end{tikzcd}
\end{equation*}
and since the surjection $R\otimes_Q \tilde{A}\twoheadrightarrow R\otimes_E \tilde{A}$ factors through $(R\otimes_Q E_0)\otimes_Q \tilde{A}$ we can apply \cref{e_koszul_cone} to conclude that $\sigma(F)\subseteq \m^n_R A.$  Hence,  \cref{newlemma} implies that $\ll_R(M)\ges n+1.$

It remains to repeat the argument from \cref{mainTheorem} to establish the desired lower bound for perfect modules of positive dimension. 
\end{proof}

Before presenting the second proof of \cref{t_mark}, we need the following construction from \cite{Herzog/Ulrich/Backelin:1991}; see also \cite[Theorem~V.27]{Ma:2014}.

\begin{chunk} \label{c_MCMfiltration}
Let $(Q, \m)$ be a Cohen-Macaulay ring and $f \in \m^d$ a regular element. Then for some integer $s \ges 1$, there exists a  filtration of $R=Q/(f)$-modules 
    \[ 0 = U_0 \subseteq U_1 \subseteq \ldots \subseteq U_d = R^s\] such that $U_{i-1} \subseteq \m U_i$ and each $U_i/U_{i-1}$ is a maximal Cohen-Macaulay (abbreviated, as usual, to MCM) $R$-module having finite projective dimension over $Q$. The condition on $U_i/U_{i-1}$ is equivalent to $\pdim_Q (U_i/U_{i-1}) = 1$.
\end{chunk}

\begin{lemma} \label{l_filtration}
    Suppose $(Q, \m)$ is a Cohen-Macaulay ring and $R = Q/(f_1, \ldots, f_c)$ where $f_1, \ldots, f_c$ is a $Q$-regular sequence. Set 
    \[n= \sum_{i=1}^c \ord(f_i) -c +1\,.\] Then for some integer $t \ges 1$, there exists a filtration of $R$-modules \[ 0 = U_0 \subseteq U_1 \subseteq \ldots \subseteq U_n = R^t\] such that $U_{i-1} \subseteq \m U_i$ and $U_i/U_{i-1}$ is an MCM $R$-module for $1\les i \les n.$
\end{lemma}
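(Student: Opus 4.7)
The plan is to induct on $c$. The base case $c=1$ is exactly \cref{c_MCMfiltration} applied to $(Q,f_1)$: it produces the desired filtration of length $\ord(f_1)=n$. For the inductive step with $c\ges 2$, set $R'=Q/(f_1,\ldots,f_{c-1})$, which is Cohen-Macaulay as a quotient of the CM ring $Q$ by a regular sequence, and write $d_c=\ord(f_c)$. Applying the inductive hypothesis to $R'$ yields an integer $t'\ges 1$ and a filtration $0=U_0'\subseteq\cdots\subseteq U_{n'}'=(R')^{t'}$ of $R'$-modules of length $n'=n-d_c+1$, with each subquotient MCM over $R'$ and satisfying the $\m$-containment.

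Since $R'$ is Cohen-Macaulay, every MCM $R'$-module has associated primes contained in $\Ass R'$, so the $R'$-regular element $f_c$ acts regularly on each $U_i'/U_{i-1}'$. Tensoring with $R=R'/(f_c)$ thus preserves exactness along the filtration, yielding a filtration $\overline{U_\bullet'}$ of $R^{t'}$ by $R$-modules with MCM-over-$R$ subquotients and the same $\m$-containment. In parallel, applying \cref{c_MCMfiltration} to $(R',f_c)$ produces an integer $s\ges 1$ and a filtration $0=W_0\subseteq\cdots\subseteq W_d=R^{s}$ of $R$-modules with each $W_j/W_{j-1}$ MCM over $R$ and of finite projective dimension over $R'$, where $d\ges d_c$; coarsening (which preserves both conditions, by the depth lemma for the MCM-quotient property) allows me to assume $d=d_c$.

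The crux is combining the two filtrations into a single filtration of length $n=n'+d_c-1$ of the free $R$-module $R^{t's}\cong R^{t'}\otimes_R R^{s}$, so that the lemma holds with $t=t's$. My plan is to define, for $0\les k\les n$,
\[
U_k=\sum_{i+j=k+1}\overline{U_i'}\otimes_R W_j,
\]
where each summand is identified with its image in $R^{t's}$ via the natural map induced by the inclusions $\overline{U_i'}\hookrightarrow R^{t'}$ and $W_j\hookrightarrow R^{s}$. One then verifies that $U_0=0$ and $U_n=\overline{U_{n'}'}\otimes_R W_{d_c}=R^{t's}$; moreover the $\m$-containment $U_{k-1}\subseteq\m U_k$ follows from $\overline{U_i'}\subseteq\m\overline{U_{i+1}'}$ applied to each summand $\overline{U_i'}\otimes W_j$ of $U_{k-1}$, since $\overline{U_{i+1}'}\otimes W_j$ is itself a summand of $U_k$.

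The main obstacle will be showing that each $U_k/U_{k-1}$ is MCM over $R$. The key observation is that distinct corners $(i_1,j_1)\ne(i_2,j_2)$ on the antidiagonal $i+j=k+1$ satisfy $(\overline{U_{i_1}'}\otimes W_{j_1})\cap(\overline{U_{i_2}'}\otimes W_{j_2})\subseteq U_{k-1}$ inside $R^{t's}$, because the ambient module is free (so intersections of tensor submodules distribute as $\overline{U_{\min(i_1,i_2)}'}\otimes W_{\min(j_1,j_2)}$, which lies on a strictly smaller antidiagonal). After a Tor-vanishing check, this identifies $U_k/U_{k-1}$ with the direct sum $\bigoplus_{i+j=k+1}(\overline{U_i'}/\overline{U_{i-1}'})\otimes_R(W_j/W_{j-1})$. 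Establishing MCM-ness of each tensor summand is the delicate step: the essential input is that $W_j/W_{j-1}$ has finite projective dimension over $R'$ (from \cref{c_MCMfiltration}), which, combined with the MCM-over-$R$ property of $\overline{U_i'}/\overline{U_{i-1}'}$, should yield depth $\dim R$ for each tensor product via a change-of-rings argument relating $\Tor^{R'}$ and $\Tor^{R}$.
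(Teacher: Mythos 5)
Your setup — inducting on $c$, reducing the inductive filtration modulo $f_c$, and producing a second filtration from \cref{c_MCMfiltration} — is sound, but the combination step has a genuine gap. You form the antidiagonal filtration $U_k=\sum_{i+j=k+1}\overline{U_i'}\otimes W_j$ and need each $U_k/U_{k-1}$ to be MCM; for that you must actually identify it with $\bigoplus_{i+j=k+1}(\overline{U_i'}/\overline{U_{i-1}'})\otimes(W_j/W_{j-1})$, since a mere surjection from this direct sum is useless (quotients of MCM modules need not be MCM). Your justification — that intersections of images of tensor products of nested submodules of free modules ``distribute,'' so that $(\overline{U_{i_1}'}\otimes W_{j_1})\cap(\overline{U_{i_2}'}\otimes W_{j_2})$ lands on a lower antidiagonal — rests on a false general principle: already for ideals $I\subseteq R$ and $J\subseteq R$ inside $F=G=R$, the intersection of the images of $I\otimes R$ and $R\otimes J$ is $I\cap J$, which is not contained in $IJ=\operatorname{im}(I\otimes J)$ in general (take $I=J=\m$). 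Freeness of the ambient module does not make the submodule lattice distribute under tensor products. The ``Tor-vanishing check'' you defer is in fact the whole content of the step: one needs Tor-independence over $R$ of the graded pieces of the two filtrations, i.e.\ $\Tor^R_{>0}(\overline{U_i'}/\overline{U_{i-1}'},\,W_j/W_{j-1})=0$, and these are pairs of MCM $R$-modules, for which no vanishing is available except via a change of rings back to $\Tor^{R'}$ (MCM over $R'$ against $\pdim_{R'}=1$) — which you invoke only at the end and only to compute depths of the summands. Even granting that vanishing, deducing that the convolution filtration has split associated graded is an additional argument you have not supplied.

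The paper avoids all of this by following an L-shaped path through the grid instead of the full antidiagonals: it tensors the inductive filtration $U_\bullet'$ over $R'$ with the single module $V_1$ (the first nonzero step of the filtration from \cref{c_MCMfiltration}), giving $U_0'\otimes V_1\subseteq\cdots\subseteq U_{n'}'\otimes V_1$, and then continues with $U_{n'}'\otimes V_1\subseteq U_{n'}'\otimes V_2\subseteq\cdots\subseteq U_{n'}'\otimes V_{\ord(f_c)}$. Every step is then governed by a single short exact sequence: on the first leg, injectivity, the $\m$-containment, and MCM-ness of the subquotient all follow from $\Tor^{R'}_{>0}(U_i'/U_{i-1}',V_1)=0$ (an MCM $R'$-module against a module of finite projective dimension over $R'$) together with the derived depth formula; on the second leg, $U_{n'}'\cong(R')^{t'}$ is free over $R'$, so the terms are just copies of $V_j$ and everything is inherited from the properties of $V_\bullet$. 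Replacing your convolution by this concatenation repairs the proof; your depth-formula computation for the individual tensor products then applies verbatim to the only tensor products that actually occur.
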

\begin{proof}
    We induct on $c$. The base case is \cref{c_MCMfiltration}. Let $R' = Q/(f_1, \ldots, f_{c-1})$ and \[n' = \sum_{i=1}^{c-1} \ord(f_i) - (c-1) +1 .\]
    By the induction hypothesis, for some $t'\ges 1$, we have a filtration of $R'$-modules 
    \[0 = U'_0 \subseteq U'_1 \subseteq \ldots \subseteq U'_{n'} \cong (R')^{t'}
    \]where $U'_{i-1} \subseteq \m U'_i$ and $U'_i/U'_{i-1}$ is an MCM $R'$-module. Since $R'$ is Cohen-Macaulay and $f=f_c$ regular on $R'$ in $\m^{\ord(f)}R'$,  we can apply \cref{c_MCMfiltration} to obtain a filtration of $R=R'/(f)$-modules 
    \[0=V_0 \subseteq V_1 \subseteq \cdots \subseteq V_{\ord(f)} \cong R^t\] for some $t\ges 1$ where each $V_{i-1} \subseteq \m V_i$ and  $\pdim_{R'}(V_i/V_{i-1})=1$; in particular, $\pdim_{R'}V_1 = 1$. 
    
    We claim that the following filtration of $R$-modules of length $n$ has all the desired properties: \[  U'_0 \otimes V_1 \subseteq U'_1 \otimes V_1 \subseteq \cdots \subseteq U'_{n'} \otimes V_1 \subseteq U'_{n'} \otimes V_2 \subseteq \cdots \subseteq U'_{n'} \otimes V_{\ord(f)}\,.\]
    Indeed, $U_0'\otimes V_1=0$ and $U'_{n'}\otimes V_{\ord(f)}\cong R^{tt'}$, and for $i\ges 1$ every term in the filtration is of the form 
    \[
    U_{n'}'\otimes V_i\cong (R')^{t'}\otimes V_i\cong V_i^{ t'}\,;
    \]
    as a consequence, there are the desired containments $U_{n'}'\otimes V_{i-1}\subseteq \m U_{n'}\otimes V_i$  and 
    \[
    \pdim_Q(U_{n'}'\otimes V_i/ U_{n'}'\otimes V_{i-1})=\pdim_Q(V_i^{ t'}/V_{i-1}^{t'})=c
    \]
    where the last equality used  $\pdim_{R'}(V_i/V_{i-1})=1;$ thus each of these subquotients are MCM $R$-modules.   Therefore, it remains to verify the first $n'$-steps satisfy the desired properties as well. 

    To this end, for each $i$, as $U'_i/U'_{i-1}$ is MCM over $R'$ and $\pdim_{R'}V_1 < \infty$ it follows that $\Tor^{R'}_{>0}(U'_i/U'_{i-1},V_1)=0$ and hence the maps along the bottom are injective:
    \[\begin{tikzcd}
	&& {\m U'_i \otimes_{R'} V_1} \\
	{} & {U'_{i-1} \otimes_{R'} V_1} && {U'_i \otimes_{R'} V_1}
	\arrow[from=2-2, to=1-3]
	\arrow[from=1-3, to=2-4]
	\arrow[hook, from=2-2, to=2-4]
\end{tikzcd}\]and so  $U'_{i-1} \otimes V_1 \subseteq \m U'_i \otimes V_1 \subseteq \m(U'_i \otimes V_1)$. It only remains to observe
\begin{align*}
\depth( U'_i/U'_{i-1}) &= \depth (U'_i/U'_{i-1} \otimes^{\mathsf L}_{R'} V_1) + \pdim_{R'}(V_1) \\
&= \depth (U'_i/U'_{i-1} \otimes_{R'} V_1) + 1\\
&=\depth(U'_i\otimes V_1/U'_{i-1}\otimes V_1)+1
\end{align*}
where the first equality uses the derived depth formula \cite[Corollary 2.2]{Iyengar:1999}, and second equality uses that $\pdim_{R'}(V_1)=1$ and $\Tor^{R'}_{>0}(U'_i/U'_{i-1},V_1)=0$.
We need only note that $U_i'/U_{i-1}'$ is an MCM $R'$-module to deduce that $U'_i\otimes V_1/U'_{i-1}\otimes V_1$ is MCM over $R$. 
\end{proof}

\begin{lemma}\label{tor_ind}
    Assume $M$ is an $\m$-power torsion module of finite flat dimension and $N$ is MCM, then $\Tor^R_{\geqslant 1}(M,N)=0$.
\end{lemma}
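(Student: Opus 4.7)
My plan is to invoke Iyengar's derived depth formula applied to $M\lotimes_R N$, and then combine it with a standard hyperext spectral sequence argument exploiting the fact that the homology of $M\lotimes_R N$ is supported only at the maximal ideal.

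If $M\lotimes_R N$ has zero homology then every Tor module vanishes and there is nothing to prove, so assume otherwise. Since $M$ is nonzero and $\m$-power torsion, a nonzero element killed by $\m$ produces an inclusion $k\hookrightarrow M$, whence $\Hom_R(k,M)\neq 0$ and $\depth_R M=0$. Since $N$ is MCM, $\depth_R N=\dim R$. By the derived depth formula~\cite[Corollary~2.2]{Iyengar:1999},
\[
\depth_R(M\lotimes_R N) \;=\; \depth_R M + \depth_R N - \depth R \;=\; \dim R-\depth R \;\ges\; 0\,.
\]

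For the opposite inequality, assume for contradiction that $s\coloneqq\sup\{i\ges 1 : \Tor^R_i(M,N)\neq 0\}$ exists; it is finite because $\fdim_R M<\infty$. The support of $\Tor_s^R(M,N)$ lies in $\supp_R M\subseteq\{\m\}$, so $\Hom_R(k,\Tor_s^R(M,N))\neq 0$. In the spectral sequence
\[
E_2^{p,q}=\Ext_R^p(k,\H^q(M\lotimes_R N))\;\Longrightarrow\;\Ext_R^{p+q}(k,M\lotimes_R N)\,,
\]
the corner term $E_2^{0,-s}=\Hom_R(k,\Tor_s^R(M,N))$ is nonzero and survives to $E_\infty$: the outgoing $d_r$ lands at $(r,-s-r+1)$, whose second coordinate is less than $-s$, where the homology of $M\lotimes_R N$ vanishes; the incoming $d_r$ originates at $(-r,-s+r-1)$, which has negative first coordinate and hence zero $\Ext_R^{-r}(k,-)$. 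Consequently $\Ext_R^{-s}(k,M\lotimes_R N)\neq 0$, so $\depth_R(M\lotimes_R N)\les -s<0$, contradicting the lower bound above. Hence no such $s\ges 1$ exists.

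The main ingredient is the derived depth formula; the spectral sequence argument is routine once one observes that the support hypothesis forces the top-degree corner term to contribute freely. The step I expect to require the most care is verifying that Iyengar's formula applies without any finite generation hypothesis on $M$, but the proof in \cite{Iyengar:1999} requires only finite flat dimension, which is our hypothesis.
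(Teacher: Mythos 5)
Your proof is correct, but it takes a genuinely different route from the paper's. The paper argues through local cohomology: since $M$ is $\m$-power torsion and $N$ is MCM, one has $M\lotimes_R N\simeq M\lotimes_R \mathrm{R}\Gamma_\m(N)\simeq \shift^{-t}\bigl(M\lotimes_R \H^t_\m(N)\bigr)$ with $t=\dim R$, so $\Tor_i^R(M,N)\cong\Tor^R_{i+t}(M,\H^t_\m(N))$, which vanishes for $i\ges 1$ because $i+t>\depth R\ges\fdim_R M$. You instead pit the derived depth formula against the elementary fact that a bounded complex whose top nonvanishing homology has depth zero has $\depth$ equal to minus its supremum, which your spectral-sequence corner argument establishes correctly (the key observations — that $\Tor_s^R(M,N)$ is again $\m$-power torsion, hence has nonzero socle when nonzero, and that the relevant differentials vanish for degree reasons — are both right). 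Each approach buys something: the paper's proof is shorter and needs only that $\mathrm{R}\Gamma_\m(N)$ is concentrated in one degree, but it silently invokes the Chouinard-type bound $\fdim_R M\les\depth R$ for $\m$-power torsion modules; your argument absorbs that bound into the depth formula and never needs it explicitly, at the cost of the spectral-sequence bookkeeping. The one point you rightly flag — whether \cite[Corollary~2.2]{Iyengar:1999} applies without finite generation of $M$ — is genuine but resolves in your favor: the derived depth formula there requires only that one factor have finite flat dimension (and that $M\lotimes_R N$ be homologically bounded and nonzero, which you arranged), and the other factor $N$ is finitely generated here in any case. Both proofs ultimately draw on the same circle of ideas, since Iyengar's depth formula is itself proved with Koszul/local cohomology techniques.
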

\begin{proof}
 Let $t=\dim R$ and observe that we have isomorphisms
    \[
    M\lotimes_R N\simeq \mathrm{R}\Gamma_\m(M)\lotimes_R N\simeq M\lotimes_R\mathrm{R}\Gamma_\m(N)\simeq M\lotimes\shift^{-t}\H^{t}_\m(N)\,;
    \]
    the first isomorphism uses that $M$ is $\m$-power torsion, the second is straightforward, and the third uses that $N$ is an MCM $R$-module. Finally as $t\ges d=\depth(R)$, and since $M$ has finite flat dimension at most $d$, it follows that $\Tor^R_{\geqslant 1}(M,N)=0.$
\end{proof}

\begin{proof}[Second proof of \cref{t_mark}]
    We can assume $R\cong Q/(f_1,\ldots,f_c)$ with $f_1,\ldots,f_c$ a regular sequence in the regular ring $Q$. By \cref{l_filtration}, there exists a filtration by $R$-submodules 
    \[ 0 = U_0 \subseteq U_1 \subseteq \ldots \subseteq U_n = R^t\,,\quad \text{with}\quad n= \sum_{i=1}^c \ord(f_i) -c +1\,,\] 
    such that each $U_{i-1} \subseteq \m U_i$ and $U_i/U_{i-1}$ is an MCM $R$-module, and $t\ges 1$. 
    
    If $M$ has infinite Loewy length there is nothing to show, so assume it is finite.  Arguing as in the proof of \cref{l_filtration}, since $M$ is $\m$(-power) torsion of finite flat dimension and each subquotient of the filtration is an MCM $R$-module we can apply \cref{tor_ind} to obtain inclusions $U_{i-1} \otimes M \subseteq  \m(U_i \otimes M)$ for each $i$. In particular, 
    \[0 \neq U_1 \otimes M \subseteq \m (U_2\otimes M)\subseteq \ldots  \subseteq \m^{n-1} (U_n\otimes M)=\m^{n-1} M^t\,,\]
    and thus, $\ll_R(M) \ges n$. 
    
    Finally,  it remains to repeat the argument from \cref{perfectModules} to establish the desired lower bound for perfect modules of positive dimension. 
\end{proof}

In fact, the second proof establishes a bound in the relative setting. That is to say, when a Cohen-Macaulay local ring $R$ admits a deformation, there is the following uniform lower bound on the Loewy lengths of nonzero modules of finite flat dimension.

\begin{theorem}
    Assume a Cohen-Macaulay local ring $R$ is a deformation: 
    \[ R\cong Q/(f_1,\ldots,f_c)\quad\text{with}\quad f_1,\ldots,f_c\text{ a regular sequence.}
    \]
    For any  nonzero $R$-module $M$ of finite flat dimension, we have 
    \[
    \ll_R(M)\ges \sum_{i=1}^c\ord(f_i)-c+1\,. \qed
    \]
\end{theorem}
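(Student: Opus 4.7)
The plan is to follow the second proof of \cref{t_mark} essentially verbatim, noting that the only place regularity of the Cohen presentation was used there is in invoking \cref{l_filtration}, and that lemma merely requires $Q$ to be Cohen-Macaulay. Since $R$ is Cohen-Macaulay and $f_1,\ldots,f_c$ is a $Q$-regular sequence with $\dim Q=\dim R+c$ and $\depth Q=\depth R+c$, the ring $Q$ is automatically Cohen-Macaulay, so \cref{l_filtration} applies directly. This produces an integer $t\ges 1$ and a filtration of $R$-modules
\[0=U_0\subseteq U_1\subseteq\cdots\subseteq U_n=R^t\,,\quad n=\sum_{i=1}^c\ord(f_i)-c+1\,,\]
with $U_{i-1}\subseteq \m U_i$ and each $U_i/U_{i-1}$ a maximal Cohen-Macaulay $R$-module.

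Next, I would assume $\ll_R(M)<\infty$ (otherwise there is nothing to prove), so $M$ is $\m$-power torsion. Since $M$ has finite flat dimension and each subquotient $U_i/U_{i-1}$ is MCM, \cref{tor_ind} yields $\Tor^R_{\ges 1}(U_i/U_{i-1},M)=0$ for all $i$. A straightforward induction on $i$ (using the short exact sequences $0\to U_{i-1}\to U_i\to U_i/U_{i-1}\to 0$) then shows $\Tor^R_{\ges 1}(U_i,M)=0$ for every $i$, and tensoring the filtration with $M$ gives inclusions
\[U_{i-1}\otimes_R M\subseteq \m(U_i\otimes_R M)\]
for each $1\les i\les n$, exactly as in the second proof of \cref{t_mark}.

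Iterating these inclusions produces $U_1\otimes_R M\subseteq \m^{n-1}(U_n\otimes_R M)=\m^{n-1}M^t$, so once I verify $U_1\otimes_R M\neq 0$ the conclusion $\ll_R(M)\ges n$ follows. Nonvanishing holds because $U_1$ is a nonzero finitely generated $R$-module, so $U_1/\m U_1$ is a nonzero $k$-vector space, and thus
\[(U_1\otimes_R M)/\m(U_1\otimes_R M)\cong (U_1/\m U_1)\otimes_k(M/\m M)\neq 0\,,\]
since $M/\m M\neq 0$ by Nakayama applied to the $\m$-power torsion module $M$. I anticipate no serious obstacle: the statement is genuinely a relative analog whose proof is a transcription of the second proof of \cref{t_mark}, and the only technical sanity check is confirming that the MCM-subquotient filtration of \cref{l_filtration} only needed Cohen-Macaulayness (not regularity) of the ambient deformation.
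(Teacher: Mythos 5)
Your proposal is correct and is exactly the argument the paper intends: the theorem is stated with a \qed precisely because the second proof of \cref{t_mark} only uses regularity of $Q$ through \cref{l_filtration}, which requires just Cohen--Macaulayness, and your observation that $Q$ is automatically Cohen--Macaulay (via $\depth Q=\depth R+c=\dim R+c=\dim Q$) is right. The remaining steps (vanishing of Tor via \cref{tor_ind}, the inclusions $U_{i-1}\otimes M\subseteq\m(U_i\otimes M)$, and the nonvanishing of $U_1\otimes_R M$) match the paper's second proof, with the last point filled in more carefully than the paper bothers to.
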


\bibliographystyle{amsplain}
\bibliography{references}

\end{document}